\newtheorem{theorem}{Theorem}[section]
\newtheorem{lemma}[theorem]{Lemma}
\newtheorem{proposition}[theorem]{Proposition}
\theoremstyle{definition}
\newtheorem{example}[theorem]{Example}
\newtheorem{claim}[theorem]{Claim}
\theoremstyle{remark}
\newtheorem{remark}[theorem]{Remark}
\numberwithin{equation}{section}
\begin{document}

\title{The representativity of pretzel knots}

\author{Makoto Ozawa}
\address{Department of Natural Sciences, Faculty of Arts and Sciences, Komazawa University, 1-23-1 Komazawa, Setagaya-ku, Tokyo, 154-8525, Japan}
\curraddr{Department of Mathematics and Statistics, The University of Melbourne
Parkville, Victoria 3010, Australia (temporary)}
\email{w3c@komazawa-u.ac.jp, ozawam@unimelb.edu.au (temporary)}

\subjclass[2000]{Primary 57M25; Secondary 57Q35}


\dedicatory{In hope of recovery of my father}

\keywords{pretzel knot, Montesinos knot, algebraic knot, representativity, bridge number}

\begin{abstract}
In the previous paper, the {\em representativity} of a knot $K$ was defined as
\[
r(K)=\max_{F\in\mathcal{F}} \min_{D\in\mathcal{D}_F} |\partial D\cap K|,
\]
where $\mathcal{F}$ is the set of all closed surfaces of positive genus containing $K$ and $\mathcal{D}_F$ is the set of all compressing disks for $F$ in $S^3$.
Then an inequality $r(K)\le b(K)$ has shown, where $b(K)$ denotes the bridge number of $K$.
This shows that a $(p,q,r)$-pretzel knot has the representativity less than or equal to $3$.

In the present paper, we will show that a $(p,q,r)$-pretzel knot has the representativity $3$ if and only if $(p,q,r)$ is either $\pm(-2,3,3)$ or $\pm(-2,3,5)$.
We also show that a large algebraic knot has the representativity less than or equal to $3$.
\end{abstract}

\maketitle

\section{Introduction}


Let $K$ be a knot in the 3-sphere $S^3$ and $F$ a closed surface of positive genus containing $K$.
In \cite{MO1}, the {\em representativity} $r(F,K)$ of a pair $(F,K)$ is defined as the minimal number of intersecting points of $K$ and $\partial D$, where $D$ ranges over all compressing disks for $F$ in $S^3$.
Furthermore, we defined in \cite{MO} the {\em representativity} $r(K)$ of a knot $K$ as the maximal number of $r(F,K)$ over all closed surfaces $F$ of positive genus containing $K$.
The representativity $r(K)$ of a knot $K$ is a new knot invariant.

The following inequality is the first fundamental result for the representativity.
Let $b(K)$ denote the bridge number of $K$.

\begin{theorem}[{\cite[Theorem 1.2]{MO}}]
For a knot $K$, $r(K)\le b(K)$.
\end{theorem}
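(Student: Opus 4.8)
The plan is to establish the equivalent statement: for every closed surface $F$ of positive genus with $K\subset F$ there is a compressing disk $D$ for $F$ in $S^3$ with $|\partial D\cap K|\le b(K)$; the minimum of $|\partial D\cap K|$ over all compressing disks for $F$ is then at most $b(K)$, and taking the maximum over $F$ gives $r(K)\le b(K)$. To bring the number $b=b(K)$ into the picture I would put $K$ into a minimal bridge position, with bridge sphere $\Sigma$ meeting $K$ transversally in $2b$ points, complementary $3$-balls $B_{\pm}$, and trivial $b$-string tangles $T_{\pm}=K\cap B_{\pm}$. Because $K\subset F$, after an isotopy of $F$ rel $K$ (and the usual innermost-disk surgeries that remove circles of $F\cap\Sigma$ which are disjoint from $K$ and inessential in $F$) the intersection $F\cap\Sigma$ consists of circles $c_{1},\dots,c_{n}$ lying in both surfaces; at each of the $2b$ points of $K\cap\Sigma$ exactly one of these circles crosses $K$ transversally inside $F$, so $\sum_{i}|c_{i}\cap K|=2b$.

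Next I would choose a circle $c=c_{i}$ that is innermost on the sphere $\Sigma$, so $c$ bounds a disk $\Delta\subset\Sigma$ whose interior is disjoint from $F$ (hence from $K$), and set $k=|c\cap K|\le 2b$. If $c$ is essential in $F$ and $k\le b$, then $\Delta$ is the desired compressing disk. The main case is $k\ge b+1$. Since $T_{+}$ is a trivial tangle, a count — each of the $2b$ endpoints of $T_{+}$ lies on exactly one arc of $T_{+}$, there are only $b$ such arcs, and $k$ of the endpoints lie on $c$ — shows that at least $k-b\ge 1$ arcs of $T_{+}$ have both endpoints on $c$. Pick such an arc $\alpha$ and a bridge disk $E$ for it, with $\partial E=\alpha\cup\beta$, where $\beta\subset\Sigma$ is an arc joining two points of $c\cap K$ and having interior disjoint from $K$. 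One arranges (this is one of the delicate points, possibly requiring one to re-choose the innermost circle $c$, to replace $T_{+}$ by $T_{-}$, or to iterate the argument inside a sub-disk) that $\beta$ has its interior in $\Delta$ and that $\mathrm{int}\,E$ is disjoint from $F$; indeed an innermost circle of $E\cap F$ is either essential in $F$ — in which case it bounds a compressing disk meeting $K$ in zero points and we are done — or it is removed by an innermost-disk surgery.

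Now $\beta$ cuts $\Delta$ into disks $\Delta_{1},\Delta_{2}$ and cuts $c$ into arcs $\delta_{1}\subset\partial\Delta_{1}$ and $\delta_{2}\subset\partial\Delta_{2}$; banding $E$ across $\beta$ onto each piece gives disks $D_{1}=\Delta_{1}\cup_{\beta}E$ and $D_{2}=\Delta_{2}\cup_{\beta}E$ whose interiors miss $F$ and, after pushing $\alpha$ slightly off $K$ within $F$, whose boundaries lie on $F$. The key observation is the intersection count: the $k-2$ crossings of $\delta_{1}\cup\delta_{2}$ with $K$ in the complement of $\partial\beta$ are distributed between $\partial D_{1}$ and $\partial D_{2}$, and resolving the corner at each of the two points of $\partial\beta$ produces exactly one further crossing there, so $|\partial D_{1}\cap K|+|\partial D_{2}\cap K|=k\le 2b$ and one of $D_{1},D_{2}$ meets $K$ in at most $b$ points. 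If the boundary of that disk is essential in $F$ we are finished; if it bounds a disk in $F$, then the two together bound a $3$-ball across which $F$ can be isotoped rel $K$ (after a further innermost-disk surgery) so as to decrease $|F\cap\Sigma|$, and the statement follows by induction on $|F\cap\Sigma|$, the base case $n=1$ — in which $c$ bounds disks whose interiors are disjoint from $F$ on both sides — being reached by the construction just described. The case $b(K)=1$, i.e. $K$ unknotted, is handled separately and is elementary.

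The step I expect to be the main obstacle is making this argument uniform. One must carry out the intersection bookkeeping so that the two numbers add up to exactly $k$; one must guarantee that a bridge arc internal to the chosen circle can be placed on its clean side — which may force re-choosing the innermost circle, switching from $T_{+}$ to $T_{-}$, or recursing into a ``dirty'' sub-disk — and one must run the minimality/induction argument that forces the constructed disk to be essential in $F$ while keeping $K$, and hence its bridge position and the sphere $\Sigma$, fixed. These are the routine but delicate points that a full proof must pin down.
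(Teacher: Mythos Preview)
This theorem is not proved in the present paper; it is quoted from the author's earlier paper \cite{MO}. What the present paper does reveal about that proof, through Lemma~2.5 and its repeated invocation (``as the proof of \cite[Theorem 1.2]{MO}'') in Claims~2.6 and~2.8 and in the proof of Theorem~1.5, is that the method is Morse-theoretic: one takes the standard height function coming from a minimal bridge position, isotopes the pair $(F,K)$ so that $F$ has no inessential saddle points while each bridge of $K$ keeps a single maximum, and then observes that an essential saddle of $F$ yields a compressing disk meeting $K$ in at most $b(K)$ points. This sidesteps any combinatorics on the bridge sphere.

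Your route is genuinely different --- an innermost-circle argument on the bridge sphere followed by a cut along a bridge disk --- and, as you candidly flag, it is not yet a proof. The weak points are exactly the ones you name, but let me sharpen two of them. First, the bridge disk $E$ has $\partial E=\alpha\cup\beta$ with $\alpha\subset K\subset F$, so $E\cap F$ will in general contain \emph{arcs} (with endpoints on $\alpha$ or at points of $\beta\cap(F\cap\Sigma)$), not just circles; innermost-circle surgery does not remove these, and getting $\mathrm{int}\,E$ disjoint from $F$ is the crux, not a routine cleanup. Second, you handle ``$k\le b$ and $c$ essential'' and ``$k\ge b+1$'', but not ``$0<k\le b$ and $c$ inessential in $F$''; here $c$ bounds a disk in $F$ that carries pieces of $K$, so you cannot simply surger it away rel $K$, and you need a separate reduction. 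The Morse-function approach of \cite{MO} avoids both issues by never singling out a level sphere, which is what it buys over your strategy; conversely, your approach, if it can be completed, would be more elementary in that it uses only the bridge sphere and pigeonhole, without thin-position or saddle analysis.
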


This shows that a $(p,q,r)$-pretzel knot has the representativity less than or equal to $3$.
In the present paper, we characterize $(p,q,r)$-pretzel knots with the representativity $3$.

\begin{theorem}\label{main}
Let $K$ be a $(p,q,r)$-pretzel knot in $S^3$.
Then $r(K)=3$ if and only if $(p,q,r)=$ $\pm(-2,3,3)$ or $\pm(-2,3,5)$.
\end{theorem}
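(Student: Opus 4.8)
The plan is to prove the two implications separately; the forward (``if'') direction is short, while essentially all of the work is in the converse.

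\emph{The ``if'' direction.} Up to mirror image, $P(-2,3,3)$ is the torus knot $T(3,4)$ and $P(-2,3,5)$ is the torus knot $T(3,5)$. If $K=T(3,n)$ lies on the standard Heegaard torus $T\subset S^3$, then every compressing disk for $T$ is isotopic to one of the two meridian disks of the complementary solid tori, and these meet $K$ in exactly $3$ and $n$ points. Hence $r(T,K)=3$, so $r(K)\ge 3$; since $b(T(3,n))=3$, the inequality $r(K)\le b(K)$ quoted above gives $r(K)=3$. As $r$ is invariant under mirror images, the same holds for $\pm(-2,3,3)$ and $\pm(-2,3,5)$.

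\emph{The ``only if'' direction.} Suppose $K=P(p,q,r)$ satisfies $r(K)=3$; we must identify the triple. First I would dispose of the degenerate pretzel knots: whenever some $p_i\in\{-1,0,1\}$, or more than one of $p,q,r$ is even, $P(p,q,r)$ is either not a knot or reduces to the unknot, a $2$-bridge knot, a torus knot of bridge number $\le 2$, or a connected sum, and in each case $r(K)\le 2$ (by $r(K)\le b(K)$, or by the value of $r$ on composite knots), contradicting $r(K)=3$. Hence $|p|,|q|,|r|\ge 2$ with at most one even, so $K$ is a genuine $3$-bridge pretzel, hence Montesinos, knot. By the classification of such knots, $K$ is either hyperbolic or, up to mirror image, one of $T(3,4)=P(-2,3,3)$ or $T(3,5)=P(-2,3,5)$; in the latter case we are done, so assume $K$ is hyperbolic and seek a contradiction. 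Fix a closed surface $F\subset S^3$ of positive genus with $K\subset F$ and $r(F,K)=3$ (note $F$ is automatically orientable), and set $\hat F=F\setminus\mathrm{int}\,N(K)$, a properly embedded surface in the exterior $E(K)$ whose boundary is a pair of parallel curves on $\partial N(K)$ of slope $\lambda_F$, the surface framing of $K$ in $F$.

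The technical heart of the argument, and the step I expect to be the main obstacle, is the lemma that $r(F,K)\ge 3$ forces $\hat F$ to be incompressible and $\partial$-incompressible in $E(K)$: a compressing disk for $\hat F$ disjoint from $K$ is already a compressing disk for $F$ meeting $K$ in $0$ points, and any $\partial$-compressing disk for $\hat F$, or any compressing disk for $F$ meeting $K$ in one or two points, can be traded --- after an innermost/outermost analysis of its intersection with meridian disks of $N(K)$ --- for a compressing disk for $F$ meeting $K$ in at most two points, contradicting $r(F,K)=3$; establishing this trade-off cleanly and tracking which intersection numbers survive is the delicate part. Granting this, I would then split on $\lambda_F$. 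If $\lambda_F$ is meridional, capping the two components of $\partial\hat F$ with meridian disks yields a closed incompressible surface of positive genus in $S^3$, which is impossible --- more carefully, a compressing disk for that closed surface pushes back to a compressing disk for $F$ meeting $K$ at most twice, again contradicting $r(F,K)=3$. So $\lambda_F$ is non-meridional and $\hat F$ is an essential surface in $E(K)$ with non-meridional boundary. If $F$ has genus $1$, then (as $[K]\neq 0$ in $H_1(F)$, since otherwise $K$ is unknotted and $r(K)=0$) $\hat F$ is an essential annulus in $E(K)$, contradicting hyperbolicity of $K$. If $F$ has genus $\ge 2$, I would invoke the classification of essential surfaces with non-meridional boundary in exteriors of pretzel (Montesinos) knots and check that no such surface arises as $\hat F$ with boundary slope the surface framing of a closed surface $F$ satisfying $r(F,K)=3$ --- concretely, the candidate surfaces (doubled spanning surfaces and their tubings) yield a closed $F$ admitting a compressing disk meeting $K$ in two points, so $r(F,K)<3$. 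This contradiction shows $K$ is not hyperbolic, so $K$ is one of the listed knots, completing the proof; the routine parts are the ``if'' direction and the degenerate-case reductions, and the genuinely delicate parts are the essentiality lemma for $\hat F$ and the genus $\ge 2$ subcase of the classification step.
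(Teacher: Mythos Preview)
Your ``if'' direction matches the paper's in spirit, though the paper actually builds the unknotted tori explicitly out of the tangle pieces rather than quoting the identification with $T(3,4)$ and $T(3,5)$.

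For the ``only if'' direction your strategy is entirely different from the paper's, and it contains a genuine gap. The paper never invokes hyperbolicity, Kawauchi's classification, or any Hatcher--Oertel--type description of essential surfaces. Instead it puts $F$ in minimal position with respect to the axis $A$ of the Montesinos diagram and the three separating disks $D_1,D_2,D_3$, analyses the possible patterns of $F\cap D_i$ and $F\cap B_i$ by innermost/outermost arguments together with a Morse lemma inside each rational tangle, shows that $F\cap B_i$ consists of disks of two possible ``types'', derives from the longitudinality of $F\cap\partial V$ a Diophantine constraint $1/p'+1/q'+1/r'=0$ on the boundary slopes, and then \emph{exhibits} explicit compressing disks meeting $K$ twice in every surviving numerical case except $(-2,3,3)$ and $(-2,3,5)$. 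The genus of $F$ is never assumed; that $F$ turns out to be a torus is a consequence of this analysis.

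In your outline the essentiality lemma for $\hat F$ is plausible and can be made rigorous along the lines you indicate, and the genus-one step (an essential annulus is incompatible with hyperbolicity) is fine. The hole is the genus~$\ge 2$ case. Writing ``invoke the classification of essential surfaces with non-meridional boundary in Montesinos knot exteriors and check'' is not a proof: Hatcher--Oertel produces many essential surfaces with two boundary components of integral slope, and for each one you would still have to \emph{construct} a compressing disk for the closed-up surface $F$ that meets $K$ at most twice. Your assertion that the candidates are ``doubled spanning surfaces and their tubings'' and that these always admit such a disk is neither justified nor obvious. In effect you have replaced the paper's explicit disk-finding (which is where all the real work lies) by an appeal to a classification that does not itself supply those disks, so in the higher-genus subcase the argument is simply missing.
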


\begin{remark}
It was shown in \cite[Theorem III]{AK} that a $(p,q,r)$-pretzel knot is a torus knot if and only if $(p,q,r)=$ $\pm(1,1,1)$, $\pm(-2,3,3)$ or $\pm(-2,3,5)$, and observed that a $\pm(-2,3,3)$-pretzel knot is a $(3,\pm4)$-torus knot and a $\pm(-2,3,5)$-pretzel knot is a $(3,\pm5)$-torus knot.
In the proof of Theorem \ref{main}, we will give unknotted tori containing these pretzel knots directly.
\end{remark}

Let $K$ be a tangle composite knot, that is, which admits at least one essential tangle decomposition.
We define the {\em tangle string number} $ts(K)$ of $K$ as the minimal number of $n$ over all $n$-string essential tangle decompositions of $K$.

\begin{proposition}[{\cite[Example 3.6]{MO}}]\label{ts}
For a tangle composite knot $K$, $r(K)\le 2 ts(K)$.
\end{proposition}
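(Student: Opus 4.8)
The plan is to prove the apparently stronger statement that $r(K)\le 2n$ whenever $(S^3,K)$ admits an $n$-string essential tangle decomposition; Proposition~\ref{ts} then follows immediately by taking $n=ts(K)$. So fix a $2$-sphere $S\subset S^3$ meeting $K$ transversely in $2n$ points and splitting $(S^3,K)$ into two essential tangles $(B_1,t_1)$ and $(B_2,t_2)$, and let $F$ be an arbitrary closed surface of positive genus containing $K$. Since $r(K)=\max_F r(F,K)$, it suffices to produce, for this $F$, a single compressing disk $D$ for $F$ in $S^3$ with $|\partial D\cap K|\le 2n$.

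First I would put $S$ and $F$ into general position by an isotopy keeping $K$ (hence $S\cap K$) fixed, so that $F\cap S$ is a disjoint union of simple closed curves on $S$, each passing through none, some, or all of the $2n$ points of $K\cap S$. Observe that $F\cap S\neq\emptyset$: otherwise $F$, and therefore $K\subset F$, would lie in one of the balls $B_i$, contradicting that $t_i=K\cap B_i$ has $2n\ge 2$ endpoints on $S=\partial B_i$.

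The key point is that $S$ is a $2$-sphere, so \emph{every} component of $F\cap S$ bounds a disk on $S$. Choose a component $c$ of $F\cap S$ that is innermost on $S$, and let $\delta\subset S$ be the disk it bounds, so that $\operatorname{int}\delta\cap F=\emptyset$; since $K\subset F$ we also get $\operatorname{int}\delta\cap K=\emptyset$, hence $\delta\cap K=c\cap K\subseteq K\cap S$ and $|\delta\cap K|\le|K\cap S|=2n$. Consequently, if $c$ happens to be essential in $F$, then $\delta$ is a compressing disk for $F$ with $|\partial\delta\cap K|=|c\cap K|\le 2n$, and we are done. Thus the whole content of the argument is to arrange that some innermost-on-$S$ component of $F\cap S$ is essential in $F$.

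To achieve this I would repeatedly reduce $|F\cap S|$ by the standard innermost-disk surgery: as long as some component of $F\cap S$ is inessential in $F$, pick one that is innermost in $F$, bounding a disk $\delta'\subset F$ with $\operatorname{int}\delta'\cap S=\emptyset$, and surger $S$ along $\delta'$ — replacing the two subdisks of (the relevant sphere component of) $S$ cut off by $\partial\delta'$ with parallel copies of $\delta'$ pushed slightly off $F$. This makes $S$ possibly a disjoint union of $2$-spheres, which is harmless, strictly decreases $|F\cap S|$, and, because the pushed-off copies of $\delta'$ are disjoint from $K$, does not increase $|S\cap K|$. When no such reduction remains, every component of $F\cap S$ — in particular every innermost-on-$S$ one — is essential in $F$, and applying the previous paragraph to an innermost disk on a component of $S$ that meets $F$ completes the proof. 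The step I expect to require the most care is exactly this reduction: one must ensure the process does not terminate with $F\cap S=\emptyset$ (that is, that $S$ is never pushed entirely off $F$), and one must handle the components of $F\cap S$ that run through points of $K$, for which surgery along a disk of $F$ whose boundary meets $K$ is the delicate case; it is presumably here that the essentiality of the tangles — equivalently, the incompressibility of $S\cap E(K)$ in the knot exterior — must be invoked.
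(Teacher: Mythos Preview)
The paper does not give a proof of this proposition at all: it is simply quoted from \cite[Example~3.6]{MO}. So there is no ``paper's proof'' to compare against; I can only assess your argument on its own.

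Your outline is reasonable, and you correctly locate the crux, but what you have written is not a proof: the step you flag as ``the delicate case'' is exactly the heart of the matter, and you do not resolve it. Two concrete points:

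\textbf{(1) Essentiality is never actually used.} The inequality $r(K)\le 2n$ is \emph{false} for an arbitrary $2n$-point sphere meeting $K$ --- for instance a $(p,q)$--torus knot with $\min(p,q)$ large meets plenty of inessential $2$-punctured spheres. So essentiality of the decomposition is not a technicality to be invoked ``presumably'' at the end; it is what makes the statement true. Your written argument does not use it anywhere.

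\textbf{(2) Surgering $S$ discards the hypothesis you need.} Each time you surger $S$ along a disk $\delta'\subset F$, the result is a disjoint union of spheres that need no longer give an essential tangle decomposition of $K$. Thus exactly when you reach the ``delicate'' situation and want to appeal to incompressibility of $S\cap E(K)$, you may no longer have it. The standard remedy is to leave $S$ alone and instead isotope the pair $(F,K)$ to minimise $|F\cap S|$; then $S$ keeps its essentiality throughout, and one argues that an innermost curve on $S$ which is inessential in $F$ forces either a reduction of $|F\cap S|$ or a compressing disk for the planar surface $S\cap E(K)$, contradicting essentiality. Separating the two cases $c\cap K=\emptyset$ and $c\cap K\neq\emptyset$ carefully (and in the latter case passing to an innermost-in-$F$ subdisk) is where the real work lies.

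One small correction in your favour: your worry that the process might terminate with $F\cap S=\emptyset$ is unfounded for surgeries along disks $\delta'\subset F$ disjoint from $K$. Such surgeries do not move the $2n$ points of $S\cap K$, and since $K\subset F$ these points always lie in $F\cap S$; hence $F\cap S\neq\emptyset$ throughout. The genuine difficulty is the other one you name.
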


By Proposition \ref{ts}, $r(K)\le 2$ if $K$ is a composite knot, and $r(K)\le 4$ if $K$ has an essential Conway sphere.
Nonetheless, the representativity of a large algebraic knot (i.e. algebraic knot with an essential Conway sphere) has more sharp upper bound.

\begin{theorem}\label{algebraic}
Let $K$ be a large algebraic knot.
Then $r(K)\le 3$.
\end{theorem}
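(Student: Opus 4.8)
The plan is to start from Proposition~\ref{ts}. Since $K$ is large algebraic it has an essential Conway sphere, hence an essential $2$-string tangle decomposition, so $r(K)\le 2\,ts(K)\le 4$; the real content is to rule out the value $4$. That is, I must show that for every closed surface $F$ of positive genus with $K\subset F$ there is a compressing disk for $F$ meeting $K$ in at most $3$ points.

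Fix such an $F$ and an essential Conway sphere $S$ for $K$; using that $K$ is algebraic I may take $S$ from the Bonahon--Siebenmann canonical system, so that the two tangles $(B_1,t_1)$ and $(B_2,t_2)$ into which $S$ cuts $(S^3,K)$ are essential algebraic tangles (recall that the complement of an algebraic tangle in an algebraic knot is again algebraic). Isotope $S$ through essential Conway spheres so that $|F\cap S|$ is minimal. Because $K\subset F$ meets $S$ in the four points $K\cap S$, the components of $\Gamma:=F\cap S$ are simple closed curves on the sphere $S$ (and on $F$) which together cross the circle $K\subset F$ transversely in exactly these four points. Standard innermost-disk and outermost-disk arguments, using irreducibility of $S^3$, minimality of $|F\cap S|$, and incompressibility of the $4$-punctured sphere $S\setminus K$ in the exterior of $K$, reduce us to the disk $\Delta\subset S$ cut off by an innermost circle of $\Gamma$: its interior is disjoint from $F$, and its boundary $\partial\Delta$ meets $K$ only in a subset of the four points $K\cap S$. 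If $\partial\Delta$ is essential on $F$ and $|\partial\Delta\cap K|\le 3$ then $\Delta$ is the required disk; if $\partial\Delta$ is inessential on $F$, one obtains either a reduction of $|F\cap S|$ (a contradiction) or directly a small compressing disk.

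The case left is that $\partial\Delta$ runs through all four points of $K\cap S$, which---there being only four---forces $\Gamma=\partial\Delta=:\gamma$, a single curve. Then $F=G_1\cup_\gamma G_2$ where $G_i=F\cap B_i$ is a compact surface with connected boundary $\gamma$, the genus of $F$ is the sum of the genera of $G_1$ and $G_2$, and the tangle strings satisfy $t_i\subset G_i$, with the four endpoints of $t_i$ lying on $\gamma=\partial G_i$. As the genus of $F$ is positive, one of the $G_i$, say $G_1$, has positive genus. Capping $\gamma$ by one of the two disks it bounds on $S$ gives a closed surface $\widehat{G_1}$ of positive genus inside the ball $B_1$, which therefore compresses there (after pushing a compressing disk off $S$ by innermost-disk swaps); any compressing disk for $\widehat{G_1}$ can be isotoped to meet $G_1$ in a curve disjoint from the cap, and that curve is essential and not $\partial$-parallel on $G_1$. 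I claim one can moreover choose this curve to meet the essential algebraic tangle $t_1\subset G_1$ in at most $3$ points. Granting the claim, the corresponding disk may be taken in the interior of $B_1$, hence disjoint from $G_2$ and from $S$; its boundary is essential on $F$ and meets $K$ in at most $3$ points, contradicting the remaining case. Hence that case does not occur, so $r(F,K)\le 3$ for every $F$, i.e.\ $r(K)\le 3$.

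I expect the displayed claim to be the main obstacle, since it is really a statement about essential algebraic tangles alone: \emph{if an essential algebraic tangle is carried by a surface of positive genus with connected boundary, the four endpoints lying on the boundary, then that surface has a compressing disk in the ambient ball whose boundary meets the tangle in at most $3$ points.} My plan is to prove this by induction on the algebraic complexity of the tangle. The base case of a rational tangle is handled directly, using that rational tangles lie on disks and have handlebody exteriors. For the inductive step one peels off a rational summand or factor across the corresponding sum or product annulus; essentiality makes that annulus incompressible and $\partial$-incompressible in the tangle exterior, so the positive genus of the carrying surface cannot be absorbed into the rational piece, and one either reads a small compressing curve off the carrying surface directly or passes to a strictly simpler essential algebraic tangle still carried by a positive-genus surface. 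Combining this tangle lemma with the case analysis above gives $r(K)\le 3$ for every large algebraic knot.
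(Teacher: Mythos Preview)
Your reduction to the case where $F\cap S=\gamma$ is a single curve through all four points of $K\cap S$ parallels the paper's, and the subsequent strategy of finding a compressing disk inside the ball carrying the positive-genus piece is sound in outline. But the proof is incomplete: the ``tangle lemma'' you isolate is only a plan. The inductive sketch is too vague---you do not explain how the carrying surface meets the splitting disk or annulus, why the simpler tangle remains essential and still carried by a positive-genus surface, or why the bound of $3$ on intersection points survives the reduction. Note also that your stated base case is vacuous: a single rational tangle is trivial, hence inessential, so the induction must really begin with a sum of two rational tangles.

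The paper circumvents the induction entirely by a sharper choice of $S$. Among the essential Conway spheres of an algebraic knot there is always one such that one side $(B,T)$ is a sum of exactly two rational tangles $(B_1,T_1)$, $(B_2,T_2)$ glued along a disk $D$ (take $S$ adjacent to a pair of leaves in the tangle tree). After reducing $F\cap S$ to the single loop $\gamma$, the paper analyses the arcs of $F\cap D$ directly. If there are several parallel arcs (Configuration~(1)), an outermost disk $\delta\subset D$ glues to half of the disk that $\gamma$ bounds in $S$ to produce a compressing disk for $F$ meeting $K$ at most three times. If there is a single arc (Configuration~(2)), a Morse-theoretic lemma applied to each rational piece forces $F\cap B_i$, hence $F\cap B$, to be a disk, contradicting essentiality of $(B,T)$. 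In effect the paper's argument \emph{is} the base case of your proposed induction, and the judicious choice of $S$ makes the inductive step unnecessary.
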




\section{Proofs}

\begin{proof} (of Theorem \ref{main})
Let $K$ be a $(p,q,r)$-pretzel knot with a form of a Montesinos knot of type $(1/p,1/q,1/r)$.
Thus $K$ is obtained from three rational tangles $(B_1,T_1)$, $(B_2,T_2)$ and $(B_3,T_3)$ of slopes $1/p$, $1/q$ and $1/r$ respectively by connecting them in series.
Let $A$ be the ``axis" of the Montesinos knot $K$, that is, an unknotted loop in the complement of $K$ such that there exist three meridian disks $D_1$, $D_2$ and $D_3$ in the solid torus $V=S^3-\text{int}N(A)$ which cut $(V,K)$ into three rational tangles $(B_i,T_i)$, $(B_2,T_2)$ and $(B_3,T_3)$, where $B_1\cap B_2=D_1$, $B_2\cap B_3=D_2$ and $B_3\cap B_1=D_3$.
See Figure \ref{pretzel1}.

\begin{figure}[htbp]
	\begin{center}
	\includegraphics[trim=0mm 0mm 0mm 0mm, width=.6\linewidth]{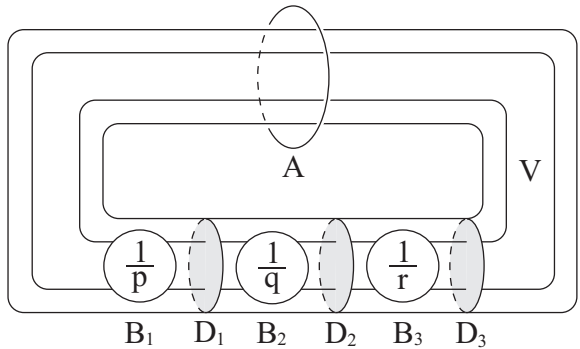}
	\end{center}
	\caption{The ``axis" $A$ of $K$ and its exterior $V$}
	\label{pretzel1}
\end{figure}

Suppose that $r(K)=3$ and let $F$ be a closed surface containing $K$ which satisfies $r(F,K)=3$.
We assume that $|F\cap A|$ is minimal up to isotopy of $F$.
Moreover, we assume that $|F\cap (D_1\cup D_2\cup D_3)|$ is minimal up to isotopy of $F$.
Then, each component of $F\cap \partial V$ is a longitude of $V$ which intersects $\partial D_i$ in one point for each $i$, and $F\cap D_i$ consists of $|F\cap \partial V|/2$-arcs and loops.

\begin{claim}\label{p,q,r}
$|p|,\ |q|,\ |r|\ge 2$.
\end{claim}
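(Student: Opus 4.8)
The claim asserts that if $r(K)=3$ for a $(p,q,r)$-pretzel knot then each of $|p|,|q|,|r|\ge 2$. Equivalently, I would prove the contrapositive: if one of the parameters, say $|r|\le 1$, then $r(K)\le 2$. The point is that when a tangle slope is $\pm 1$ or $0$ the pretzel knot simplifies — it is either composite, or a two-bridge (rational) knot, or otherwise admits an essential annulus/sphere of small intersection with $K$ — forcing the representativity down.

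**Main steps.**

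First I would recall the standard classification of degenerate cases. If some $|r|=0$ (i.e. the tangle slope $1/r$ is $\infty$), the corresponding rational tangle is the $0$- or $\infty$-tangle and the pretzel "knot'' either is not a knot, is a connected sum, or reduces to a $(p,q)$-two-bridge knot; in each valid case $b(K)\le 2$, so by Theorem 1.1, $r(K)\le 2$. If some $|r|=1$, a flype/standard isotopy absorbs that $\pm 1$ twist region and the $(p,q,\pm1)$-pretzel knot is again a two-bridge knot (its fraction being $1/p+1/q\pm 1$, reading as a rational number), hence $b(K)\le 2$ and $r(K)\le 2$. So in all these cases $r(K)\le 2<3$, contradicting $r(K)=3$. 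Thus under the hypothesis $r(K)=3$ we must have $|p|,|q|,|r|\ge 2$. I would phrase this cleanly: assuming WLOG $|r|\le 1$, show $K$ is a two-bridge knot (or not a prime knot, or not a knot), apply $r(K)\le b(K)$.

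**The subtle point.**

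The one case needing care is when two of the parameters are $\pm 1$ but the resulting knot is still genuinely the pretzel knot rather than an unknot — e.g. $(p,1,1)$ or $(1,1,1)$. Here I would either invoke directly that such a pretzel knot is a torus knot or two-bridge knot (for $(1,1,1)$ it is the $(3,\pm 4)$... no — it is the $(\pm 2,3)$-torus knot, a two-bridge knot), or else note that $(1,1,1)$-pretzel is the trefoil and more generally $(p,1,1)$ is two-bridge; in every subcase $b(K)\le 2$. The cleanest route is: for any pretzel parameter vector with some entry of absolute value $\le 1$, the knot has bridge number at most $2$, and then Theorem 1.1 finishes. So the only genuine obstacle is assembling this elementary classification of degenerate pretzels carefully enough to be sure no exotic case sneaks through; after that, the claim is immediate from $r(K)\le b(K)$.

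**Why this suffices for the rest of the argument.** With Claim 1.4 in hand, the later analysis may assume all three tangles are "nontrivial'' rational tangles, which is exactly what is needed to run the surface-position argument against the axis $A$ and the meridian disks $D_i$.
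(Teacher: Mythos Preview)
Your approach matches the paper's: show that if some parameter has absolute value at most $1$ then $K$ falls into a short list of simple types, each forcing $r(K)\le 2$. The paper's own proof is one sentence: if $|p|\le 1$ then $K$ is a connected sum of two torus knots, a torus knot, or a $2$-bridge knot, and in each case $r(K)\le 2$.

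There is, however, a slip in your write-up. You assert that in every degenerate case $b(K)\le 2$ and then invoke $r(K)\le b(K)$. This fails in the composite subcase: for example $P(0,3,3)$ is the connected sum of two trefoils, and by Schubert's additivity its bridge number is $3$, not $2$. What is needed there is not the bridge-number bound but the separate fact, recorded just after Proposition~\ref{ts}, that a composite knot has $r(K)\le 2$. You do flag ``not a prime knot'' as one of the possible outcomes, so you are aware this case occurs; you just need to route it through Proposition~\ref{ts} (with $ts(K)=1$) rather than through Theorem~1.1. With that correction your argument is complete and essentially identical to the paper's.
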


\begin{proof}

If $|p|\le 1$, then $K$ is a connected sum of two torus knots, a torus knot, or a 2-bridge knot.
In either cases, we have $r(K)\le 2$, a contradiction.
\end{proof}

In the following, we show that $F\cap D_i$ has no loop and consists of mutually parallel $|F\cap \partial V|/2$-arcs.

\begin{claim}\label{no arc}
There exists no outermost arc $\alpha$ of $F\cap D_i$ with $|\alpha \cap K|=0$.
\end{claim}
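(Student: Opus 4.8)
The plan is to argue by contradiction: suppose such an outermost arc $\alpha$ exists, and let $\delta\subset D_i$ be the disk it cuts off, so that $\partial\delta=\alpha\cup\beta$ with $\beta$ a subarc of $\partial D_i$. Two observations come essentially for free. First, since $F$ meets $D_i$ transversely, $F\cap D_i$ is a $1$-manifold properly embedded in $D_i$, so every point of $F\cap\partial D_i$ is an endpoint of an arc of $F\cap D_i$; hence $\mathrm{int}\,\beta$ is disjoint from $F$, for otherwise such an arc would run into $\mathrm{int}\,\delta$, contradicting that $\alpha$ is outermost. Second, because $K\subset F$, $K$ is disjoint from $\partial V\supset\beta$, and $\alpha\cap K=\emptyset$ by hypothesis, any point of $\delta\cap K$ would have to lie in $\mathrm{int}\,\delta\cap F$; so once we know $\mathrm{int}\,\delta\cap F=\emptyset$ we get $\delta\cap K=\emptyset$ automatically.

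So the first real step is to clean $\mathrm{int}\,\delta$ of loops of $F\cap D_i$. Choosing $\alpha$ among all outermost arcs with $|\alpha\cap K|=0$ so that $|\mathrm{int}\,\delta\cap F|$ is minimal, suppose a loop remained; then an innermost one $\ell$ bounds a subdisk $\delta'\subseteq\delta$ with $\mathrm{int}\,\delta'\cap F=\emptyset$, and since $\ell\subset D_i$ with $|K\cap D_i|=2$ we have $|\partial\delta'\cap K|=|\ell\cap K|\le 2$. If $\ell$ is essential on $F$, then $\delta'$ is a compressing disk for $F$ with fewer than $3$ intersections with $K$, contradicting $r(F,K)=3$. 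If $\ell$ bounds a disk on $F$, then a standard innermost-ball isotopy removes $\ell$ from $F\cap D_i$ without touching $\alpha$, without increasing $|F\cap A|$, and without affecting $F\cap D_j$ for $j\ne i$, contradicting either the minimality of $|F\cap(D_1\cup D_2\cup D_3)|$ or our choice of $\alpha$. Hence $\mathrm{int}\,\delta\cap F=\emptyset$, and therefore $\delta$ is an embedded disk whose interior is disjoint from $F\cup K$ and which meets $\partial V$ exactly in $\beta\subset\partial D_i$. Pushing the band of $F$ that runs along $\alpha$ across $\delta$ and off the boundary edge of $D_i$ is then an ambient isotopy supported in a neighbourhood of $\delta$, which is disjoint from $A$ and from $D_j$ ($j\ne i$); it deletes the arc $\alpha$ from $F\cap D_i$ and creates no new intersections with $D_1\cup D_2\cup D_3$, so it leaves $|F\cap A|$ unchanged while strictly lowering $|F\cap(D_1\cup D_2\cup D_3)|$. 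This contradicts minimality and proves the claim.

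I expect the only point needing genuine care to be the loop-removal step, and in particular the subcase where a loop of $F\cap D_i$ passes through one of the two points of $K\cap D_i$: this is exactly why the bound $|\ell\cap K|\le 2<3$ together with the hypothesis $r(F,K)=3$ is doing the work, rather than some weaker statement. The remaining ingredients — that $F\cap D_i$ is properly embedded, the innermost-ball isotopy, and the final outermost-disk reduction — are routine, the whole content being their combination with $r(F,K)=3$, the inclusion $K\subset F$, and the two minimality assumptions on $|F\cap A|$ and $|F\cap(D_1\cup D_2\cup D_3)|$.
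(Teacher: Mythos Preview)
Your argument is essentially correct but takes a different route from the paper's. The paper's proof is three sentences: since each component of $F\cap\partial V$ is a longitude of $V$ meeting $\partial D_i$ in exactly one point, the arc $\beta=\delta\cap\partial V$ necessarily connects two \emph{distinct} longitudinal components of $F\cap\partial V$; an isotopy of $F$ along $\delta$ then merges these two longitudes (equivalently, cancels two adjacent meridian disks of $F\cap N(A)$) and so reduces $|F\cap A|$, contradicting the \emph{first} minimality hypothesis. Your argument instead targets the \emph{second} minimality ($|F\cap(D_1\cup D_2\cup D_3)|$), which forces you to clean loops out of $\operatorname{int}\delta$ first using $r(F,K)=3$. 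The paper's route is shorter and never invokes $r(F,K)=3$ for this claim (loops are dealt with separately in the two claims that follow); yours is more self-contained but longer, and a couple of your intermediate assertions---that the innermost-ball isotopy leaves $F\cap D_j$ and $\alpha$ untouched---are not literally true in all cases (the disk $D''\subset F$ bounded by $\ell$ may well meet $D_j$ or contain $\alpha$), though in each such case one still obtains a contradiction, just via a drop in $|F\cap A|$ or in the total $|F\cap(D_1\cup D_2\cup D_3)|$ rather than in $|\operatorname{int}\delta\cap F|$ alone.
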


\begin{proof}
Let $\alpha$ be an outermost arc of $F\cap D_i$ with $|\alpha \cap K|=0$, and $\delta$ be the corresponding outermost disk.
Since each component of $F\cap \partial V$ intersects $\partial D_i$ in one point for each $i$, $\delta\cap \partial V$ connects two longitudinal components of $F\cap \partial V$.
Hence by an isotopy of $F$ along $\delta$, $|F\cap A|$ can be reduced, a contradiction.
\end{proof}

\begin{claim}\label{no loop}
There exists no innermost loop $\beta$ of $F\cap D_i$ with $|\beta\cap K|\le 1$.
\end{claim}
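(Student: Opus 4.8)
The plan is to argue by contradiction, splitting on $|\beta\cap K|\in\{0,1\}$. In either case let $\delta\subset D_i$ be the innermost disk with $\partial\delta=\beta$. Since $\beta$ is innermost in $D_i$ we have $\mathrm{int}\,\delta\cap F=\emptyset$; as $\delta\subset D_i$ and $\beta$ is a whole component of the $1$-manifold $F\cap D_i$, this gives $\delta\cap F=\beta$, and since $K\subset F$ it also gives $\mathrm{int}\,\delta\cap K=\emptyset$.

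Suppose first that $|\beta\cap K|=1$. As $K$ is a connected closed curve on $F$ meeting the simple closed curve $\beta$ transversally in an odd number of points, $\beta$ cannot separate $F$; in particular $\beta$ is essential in $F$. Hence $\delta$ is a compressing disk for $F$ in $S^3$ with $|\partial\delta\cap K|=1$, contradicting $r(F,K)=3$, because every compressing disk for $F$ meets $K$ in at least three points.

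Now suppose that $|\beta\cap K|=0$. If $\beta$ is essential in $F$, then again $\delta$ is a compressing disk for $F$, this time with $|\partial\delta\cap K|=0$, again contradicting $r(F,K)=3$. So $\beta$ bounds a disk $\delta'$ in $F$; since $K$ is connected and disjoint from $\beta$, either $K\subset\mathrm{int}\,\delta'$ or $K\cap\delta'=\emptyset$. In the former case $K$ is a simple closed curve lying in the embedded disk $\mathrm{int}\,\delta'\subset S^3$, hence unknotted, so $r(K)\le b(K)=1$ by the inequality $r(K)\le b(K)$, contradicting $r(K)=3$. In the latter case $\delta\cap\delta'=\beta$, so $\Sigma=\delta\cup_\beta\delta'$ is an embedded $2$-sphere disjoint from $K$; by the irreducibility of $S^3$ it bounds a ball $B$, and pushing the subdisk $\delta'$ of $F$ across $B$ and just past $D_i$ yields a new surface that still contains $K$ but meets $D_i$ in one fewer loop, contradicting the minimality of $|F\cap(D_1\cup D_2\cup D_3)|$.

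I expect the main obstacle to be making this last reduction honest: one should take $\beta$ innermost also among the loops of $(D_1\cup D_2\cup D_3)\cap F$ lying in $\delta'$, and then use that $A$ and each $D_j$, being disjoint from both $\delta$ and $\delta'$, lie wholly inside or wholly outside $B$; pushing across $B$ when $A\not\subset B$ and across the complementary ball otherwise should, in every case, give a surface with $|F\cap A|$ not increased and $|F\cap(D_1\cup D_2\cup D_3)|$ strictly decreased. This bookkeeping in the ``inessential and disjoint from $K$'' subcase is the only delicate point; the subcase $|\beta\cap K|=1$ and the subcase ``$\beta$ essential in $F$'' are immediate from the definition of $r(F,K)$.
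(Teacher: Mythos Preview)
Your argument is correct and follows the same approach as the paper: split on $|\beta\cap K|\in\{0,1\}$, use essentiality of $\beta$ in $F$ together with $r(F,K)=3$ in the essential cases, and in the inessential case isotope the disk $\delta'\subset F$ onto $\delta$ to reduce intersections. You are in fact more careful than the paper (which omits the subcase $K\subset\mathrm{int}\,\delta'$ and the $|F\cap A|$ bookkeeping); the one slip is the unjustified assertion that $A\cap\delta'=\emptyset$, but since $\delta\subset D_i\subset V$ is disjoint from $A$, pushing $\delta'$ onto $\delta$ can only remove points of $F\cap A$, so the lexicographic minimality of $(|F\cap A|,\,|F\cap(D_1\cup D_2\cup D_3)|)$ is still contradicted.
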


\begin{proof}
Let $\beta$ be an innermost loop of $F\cap D_i$ with $|\beta\cap K|=0$, and $\delta$ be the corresponding innermost disk.
If $\partial \delta$ is essential in $F$, then $r(F,K)=0$, a contradiction.
Otherwise, $\partial \delta$ bounds a disk $\delta'$ in $F$ and we can remove $\alpha$ from $D_i$ by isotoping $\delta'$ into $\delta$.

Next let $\beta$ be an innermost loop of $F\cap D_i$ with $|\beta\cap K|=1$, and $\delta$ be the corresponding innermost disk.
Since $|\beta\cap K|=1$, $\beta$ is essential in $F$.
Then $r(F,K)\le 1$, a contradiction.
\end{proof}

\begin{claim}\label{no loop2}
There exists no innermost loop $\beta$ of $F\cap D_i$ with $|\beta\cap K|=2$.
\end{claim}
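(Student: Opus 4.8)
My plan is to follow the pattern of the three preceding claims and to split on whether $\beta$ is essential in $F$. Suppose such a loop $\beta$ exists, and let $\delta\subset D_i$ be the innermost disk it bounds, so that $\text{int}\,\delta\cap F=\emptyset$ and hence $\text{int}\,\delta\cap K=\emptyset$ since $K\subset F$. Because $|K\cap D_i|=2$ and both points of $K\cap D_i$ lie on $\beta$, every component of $F\cap D_i$ other than $\beta$ is disjoint from $K$. If $\beta$ is essential in $F$, then $\delta$ is a compressing disk for $F$ in $S^3$ with $|\partial\delta\cap K|=|\beta\cap K|=2<3=r(F,K)$, contradicting the choice of $F$; so $\beta$ bounds a disk $\delta'$ in $F$.

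In the remaining case I would first perform a standard innermost-disk cleanup of the intersection $F\cap(A\cup D_1\cup D_2\cup D_3)$. By Claim \ref{no arc} there are no arcs of $F\cap D_i$, and by Claim \ref{no loop} there is no innermost loop of $F\cap D_i$ meeting $K$ in at most one point; moreover an innermost loop of $F$ lying on any $D_j$ and essential in $F$ would again give a compressing disk meeting $K$ in at most $|K\cap D_j|=2$ points, which is impossible. Together these let one reduce to the situation $\delta'\cap(A\cup D_1\cup D_2\cup D_3)=\beta$. Since $K$ is connected and $\delta'\cap K$ is exactly one arc $\gamma$ joining the two points of $K\cap D_i$, and since $\gamma\subset\delta'$ meets $D_1\cup D_2\cup D_3$ only at its two endpoints, both on $D_i$, the arc $\gamma$ lies in one of the two tangle balls $B_i,B_{i+1}$ adjacent to $D_i$, say $B$.

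Thus $\gamma$ is a strand of the rational tangle $K\cap B$ having both of its endpoints on the single meridian disk $D_i$. But $K\cap B$ is one of the three tangles, of slope $1/p$, $1/q$, or $1/r$; it is non-trivial by Claim \ref{p,q,r}, and its two strands each join the two distinct meridian disks bounding $B$, so in particular no strand of $K\cap B$ can have both endpoints on $D_i$. This contradiction completes the proof.

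The step I expect to be the real obstacle is the cleanup in the second paragraph. The natural move for removing a component $c$ of $F\cap D_j$ --- isotoping $F$, or $D_j$, across an innermost disk bounded by $c$ --- has its distinguished curve $c$ lying on $D_j$ itself, so it does not obviously lower $|F\cap D_j|$; one has to argue with some care (weighting the complexity appropriately, showing the residual curve is itself removable, or using that the innermost disk employed is disjoint from $K$) that the minimality of $|F\cap A|$ and of $|F\cap(D_1\cup D_2\cup D_3)|$ is genuinely violated. Granting this, the rational-tangle argument above settles the claim.
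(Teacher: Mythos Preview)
Your approach and the paper's are the same in outline: show that $\beta$ bounds a disk in $F$, produce a subdisk of $F$ lying in a single tangle ball $B_k$ and carrying exactly one strand of $T_k$, and then observe that for a rational tangle of slope $1/m$ with $|m|\ge 2$ no strand has both endpoints on a single meridian disk, contradicting Claim~\ref{p,q,r}.

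The only real difference is in how that subdisk is found, and it is exactly the point you flag as the obstacle. You try to isotope $F$ until the entire disk in $F$ bounded by $\beta$ meets $D_1\cup D_2\cup D_3$ only in $\beta$. The paper sidesteps this. Writing $\delta\subset F$ for the disk bounded by $\beta$ (what you call $\delta'$), it simply passes to a loop $\beta'$ of $\delta\cap(D_1\cup D_2\cup D_3)$ that is \emph{innermost in $\delta$}, with innermost subdisk $\delta'\subset\delta$. This $\delta'$ automatically has interior disjoint from every $D_j$ and hence lies in a single $B_k$; no isotopy of $F$ is performed, so the minimality of $|F\cap A|$ and of $|F\cap(D_1\cup D_2\cup D_3)|$ is never endangered. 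Since $K\cap\delta'$ is a proper $1$-submanifold of the disk $\delta'$, the number $|\beta'\cap K|$ is even; together with Claim~\ref{no loop} and $|K\cap D_j|=2$ this forces $|\beta'\cap K|=2$, so $\delta'$ carries exactly one subarc of $K$. That is the subdisk you were after, obtained without any cleanup. Replacing your second paragraph with this single ``innermost in $\delta$'' move removes the difficulty you identified.

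One small correction: your sentence ``By Claim~\ref{no arc} there are no arcs of $F\cap D_i$'' is not an immediate consequence of Claim~\ref{no arc}; it needs the extra observation (which you do make just before) that every component of $F\cap D_i$ other than $\beta$ misses $K$, so that an outermost arc would have $|\alpha\cap K|=0$. This is harmless, and in any case becomes moot once you switch to the innermost-in-$\delta$ argument.
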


\begin{proof}
Suppose there exists an innermost loop $\beta$ of $F\cap D_i$ with $|\beta\cap K|=2$.
Then $\beta$ bounds a disk $\delta$ in $F$ since $r(K)=3$.
Let $\beta'$ be a loop of $\delta\cap (D_1\cup D_2\cup D_3)$ which is innermost in $\delta$, and $\delta'$ be the corresponding innermost disk in $\delta$.
By Claim \ref{no loop}, $|\beta'\cap K|=2$ and $\delta'$ contains one subarc of $K$.
Since $\delta'$ is parallel into some $D_i$, at least one of $p,\ q,\ r$ is equal to $0$.
This contradicts Claim \ref{p,q,r}.
\end{proof}

By Claims \ref{no arc}, \ref{no loop} and \ref{no loop2}, $F\cap D_i$ has one of the following configurations.
See Figure \ref{conf}.

\begin{itemize}
\item[(1)] $F\cap D_i$ consists of mutually parallel arcs whose two outermost arcs $\alpha_{i1}$, $\alpha_{i2}$ satisfying $|\alpha_{ij}\cap K|=1$ for $j=1,2$.
\item[(2)] $F\cap D_i$ consists of one arc $\alpha_i$ satisfying $|\alpha_i\cap K|=2$.
\end{itemize}

\begin{figure}[htbp]
	\begin{center}
	\begin{tabular}{cc}
	\includegraphics[trim=0mm 0mm 0mm 0mm, width=.2\linewidth]{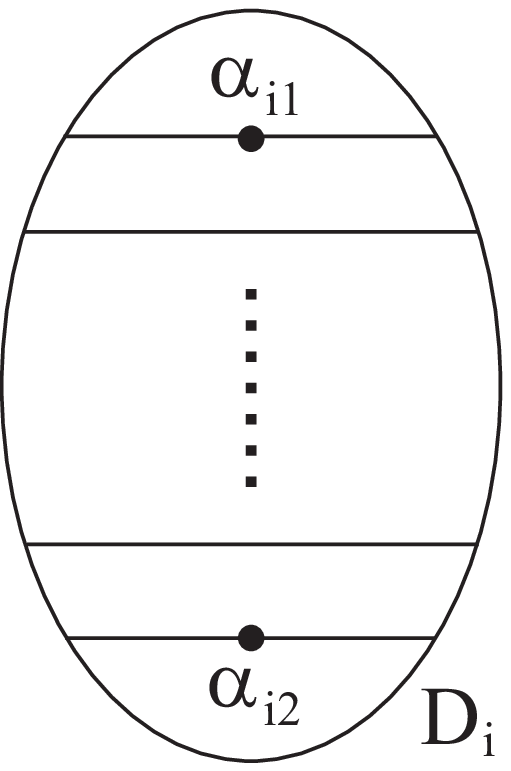}&
	\includegraphics[trim=0mm 0mm 0mm 0mm, width=.2\linewidth]{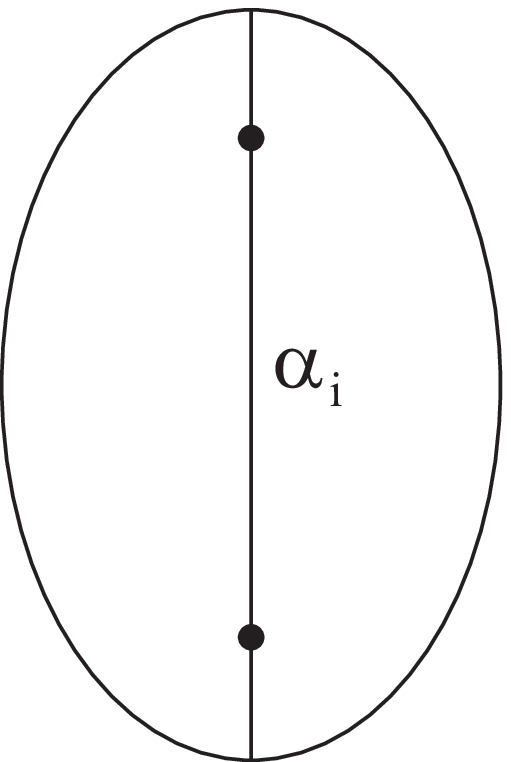}\\
	Configuration (1) & Configuration (2)\\
	\end{tabular}
	\end{center}
	\caption{Two configurations for $F\cap D_i$}
	\label{conf}
\end{figure}

In the following, we show that Configuration (2) does not exist.

\begin{lemma}\label{Morse}
Let $(B,T)$ be a trivial tangle and $F$ is a properly embedded surface in $B$ which contains $T$.
Let $h:B\to \Bbb{R}_{\ge 0}$ be the standard Morse function with one critical point and suppose that each component of $T$ has exactly one maximal point with respect to $h$.
Then a pair $(F,T)$ can be isotoped so that $F$ has no inessential saddle point and each component of $T$ has one maximal point with respect to $h$.
\end{lemma}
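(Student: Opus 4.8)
The proof will be a tautness argument with respect to the height function $h$: put $(F,T)$ into a generic Morse position, minimize a complexity among the positions compatible with the hypothesis on $T$, and show that an inessential saddle of $h|_F$ could then be removed, contradicting minimality. The only feature beyond the classical situation is that every isotopy must keep each component of $T$ equipped with a single maximum.

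First I would isotope $(F,T)$ so that $h|_F$ is Morse, $h|_T$ is Morse on each component of $T$ with exactly one interior maximum --- possible by the hypothesis together with a small perturbation --- and the critical values of $h$, $h|_F$ and $h|_T$ are all distinct. For a regular value $t$ the level set $h^{-1}(t)$ is a $2$-sphere, $F\cap h^{-1}(t)$ is a disjoint union of circles, and as $t$ varies this circle system is born, dies, or changes by a band move exactly at the critical points of $h|_F$. The set of positions of this kind is nonempty, so I may choose one in which the number of saddle points of $h|_F$ is as small as possible (one could instead minimize the width $\sum_i |F\cap h^{-1}(t_i)|$ over regular values $t_i$ picked between consecutive critical values); call such a position \emph{taut}. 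Since in a taut position $T$ already has one maximum per arc, the lemma follows once we show that a taut position has no inessential saddle.

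Suppose, for a contradiction, that a taut position has an inessential saddle $s$, at height $c=h(s)$. By definition of inessentiality there is a disk $\delta\subset F$ that realizes the cancellation of $s$ with an adjacent extremum of $h|_F$: its boundary lies on a single regular level circle of $F$ just above or below $c$, its interior contains exactly one further critical point of $h|_F$ (a maximum or a minimum) and no others, and $\delta$ meets each regular level of $h$ in at most one circle. If $\delta$ is disjoint from $T$, then the isotopy of $F$ that pushes the extremum of $\delta$ across the level $c$ cancels $s$ with that extremum, is the identity on $T$, and produces a position of the above kind with strictly fewer saddle points --- contradicting tautness. Hence it is enough to make $\delta$ disjoint from $T$.

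This last reduction is where the substantive work lies, and the step I expect to be the main obstacle. Because $T$ and $\delta$ both lie on the surface $F$, the intersection $\delta\cap T$ is a union of disjoint arcs, and since $\delta$ misses $\partial B$ (hence $\partial T$) these are chords of the disk $\delta$; moreover, since each arc of $T$ has just one maximum, each contributes at most one such chord, the subarc around its maximum. I would remove the chords one by one by an innermost-chord argument in $\delta$: an outermost chord $\tau$ cuts off a subdisk $\delta_0\subseteq\delta$ with $\delta_0\cap T=\tau$, and the triviality of the tangle $(B,T)$, together with the parallelism that $\delta_0$ furnishes between $\tau$ and an arc of $\partial\delta_0$, lets $\tau$ be slid across $\delta_0$ and a little beyond by an ambient isotopy of the pair that carries the maximum of $\tau$ along with it. Such an isotopy introduces no new critical point of $h|_T$ --- so $T$ keeps one maximum per arc --- and can be arranged not to increase the number of saddles of $h|_F$; after finitely many of these moves $\delta$ is disjoint from $T$ and the previous paragraph finishes the argument. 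The delicate point is precisely this chord-slide: one must place its support in a thin ball along $\delta_0$, possibly after first isotoping other sheets of $F$ out of a neighborhood of $\delta_0$, and then verify that the Morse data of both $h|_F$ and $h|_T$ behave as claimed; granting that, induction on the number of saddles of $h|_F$ completes the proof.
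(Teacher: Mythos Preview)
The paper does not actually prove this lemma: its entire proof is the sentence ``This lemma essentially follows \cite[Lemma 2.1, 2.2]{MO}.'' So there is no in-paper argument to compare against; what you have written is an attempt to supply the argument that the author defers to his earlier paper.

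Your tautness scheme --- put $(F,T)$ in Morse position, minimise the number of saddles among positions in which each strand of $T$ has a single maximum, and then argue that an inessential saddle could be cancelled --- is the standard approach to statements of this kind and is almost certainly what the cited lemmas in \cite{MO} do. The outline is sound. One small point worth tightening: your claim that each component of $T$ contributes at most one chord to $\delta\cap T$ uses that $\partial\delta$ lies on a single level sphere and that $\delta$ lies \emph{above} it (so every chord carries a maximum of $h|_T$); you should either say why this is the relevant case or treat separately the possibility that $\delta$ contains a minimum of $h|_F$ rather than a maximum.

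The step you yourself flag as ``delicate'' and then explicitly leave unproved (``granting that'') is the real content: pushing the chord $\tau$ off $\delta_0$ by an isotopy of the pair that neither creates saddles on $F$ nor extra maxima on $T$. Here the clean formulation is to slide $T$ \emph{within the surface} $F$ across $\delta_0$; this is a surface isotopy, so $h|_F$ is literally unchanged, and one is left only with checking that the slid strand still has a single maximum, which follows from the shape of $\delta_0$ with respect to $h$. Writing that out would close the gap you have identified; as it stands your proposal is a correct plan with the key verification still to be filled in, which is essentially the same status as the paper's own one-line citation.
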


\begin{proof}
This lemma essentially follows \cite[Lemma 2.1, 2.2]{MO}.
\end{proof}

\begin{claim}\label{disk}
In Configuration (2), $F\cap B_1$ consists of a disk.
\end{claim}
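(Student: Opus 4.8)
The plan is to first pin down the boundary of $F\cap B_1$, and then, assuming $F\cap B_1$ is not a disk, to push its extra topology into the tangle ball $B_1$, where a compressing disk manufactured from a handle will contradict either $r(F,K)=3$ or the minimality of $|F\cap(D_1\cup D_2\cup D_3)|$. For the boundary: in Configuration (2) we have $|F\cap\partial V|=2$, so $F$ meets the annulus $A_1:=\partial B_1\cap\partial V$ (the one bounded by $\partial D_1\cup\partial D_3$) in two spanning arcs; concatenating these with the arcs $\alpha_1=F\cap D_1$ and $\alpha_3=F\cap D_3$ shows that $F\cap\partial B_1$ is a single simple closed curve $c$ on the sphere $\partial B_1$. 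Since $|\alpha_1\cap K|=|\alpha_3\cap K|=2$ while the two arcs in $A_1$ miss $K$, the curve $c$ passes through all four points of $K\cap\partial B_1$. Hence $F\cap B_1$ is a compact surface with connected boundary $c$ that contains the two arcs of the trivial tangle $T_1$, all of whose endpoints lie on $c$; a closed component of $F\cap B_1$ is disjoint from $K$ and compressible in the handlebody $B_1\setminus N(K)$, which would produce a compressing disk for $F$ disjoint from $K$ against $r(F,K)=3$ (and spherical components are excluded by the minimality assumptions), so we may take $F\cap B_1$ connected. It then remains to show $F\cap B_1$ has genus $0$.

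Suppose $F\cap B_1$ has positive genus. By a thin-position argument based on Lemma \ref{Morse} — placing $(B_1,F\cap B_1,T_1)$ in a position where each strand of $T_1$ has exactly one maximum and $F\cap B_1$ has no inessential saddle for the standard Morse function $h$ on $B_1$ — one extracts a regular value $t$ and a circle $\gamma\subset F\cap h^{-1}(t)$ that is innermost in the disk $h^{-1}(t)$, essential in $F\cap B_1$, and disjoint from $K$. Let $E$ be the sub-disk of $h^{-1}(t)$ bounded by $\gamma$; then $\operatorname{int}E\cap F=\emptyset$, so $E$ is a compressing disk for $F\cap B_1$ in $B_1$ with $\partial E\cap K=\emptyset$. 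If $\gamma$ is essential in $F$, then $E$ is a compressing disk for $F$ with $|\partial E\cap K|=0$, contradicting $r(F,K)=3$. If $\gamma$ is inessential in $F$ it bounds a disk $E'$ in $F$; since $\gamma$ is essential in $F\cap B_1$, the disk $E'$ is not contained in $B_1$ and hence meets $D_1$ or $D_3$, so $|E'\cap(D_1\cup D_2\cup D_3)|\ge 1$. Choosing the data innermost, the $2$-sphere $E\cup E'$ bounds a ball across which an isotopy of $F$ pushes $E'$ onto $E$; as $E\subset\operatorname{int}B_1$ is disjoint from $D_1\cup D_2\cup D_3$, this isotopy strictly decreases $|F\cap(D_1\cup D_2\cup D_3)|$, against its minimality. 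In either case we reach a contradiction, so $F\cap B_1$ is a disk.

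I expect the delicate point to be the extraction of the disk $E$, that is, controlling the interior of $F\cap B_1$ inside the tangle ball: a compressing disk for $F\cap B_1$ in $B_1$ need not be disjoint from $K$, and one must exploit that $(B_1,T_1)$ is a \emph{trivial} tangle — for instance via its splitting disk, which misses $T_1$ — together with the fact that $K\cap B_1$ is just two arcs, using Lemma \ref{Morse} to arrange that the extra handle of $F\cap B_1$ be compressed along a level sub-disk of $B_1$ missing $K$. A secondary point requiring care is the bookkeeping when $\gamma$ bounds a disk $E'$ in $F$: one must track how $E'$ leaves $B_1$ through $D_1$ or $D_3$, and whether it also leaves $V$, to be sure that the isotopy across $E\cup E'$ really lowers $|F\cap(D_1\cup D_2\cup D_3)|$ without raising $|F\cap A|$.
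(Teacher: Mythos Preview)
Your overall strategy matches the paper's: put $(B_1,F\cap B_1,T_1)$ in the Morse position of Lemma~\ref{Morse} and extract from an essential saddle a compressing disk for $F\cap B_1$ with few intersections with $K$. The paper's proof is two lines: if $F\cap B_1$ is not a disk there is an essential saddle, and ``as the proof of \cite[Theorem~1.2]{MO}'' this yields $r(F\cap B_1,T_1)\le 2$, contradicting $r(F,K)=3$.

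The substantive gap in your argument is the assertion that the innermost level circle $\gamma$ can be taken \emph{disjoint} from $K$. The thin-position argument you invoke does not give this. What it gives (and what the paper uses) is $|\gamma\cap T_1|\le 2$: each of the two strands of $T_1$ has a single maximum, so an innermost essential level circle can meet $T_1$ in at most the number of bridges, which is $2$. Nothing prevents the two maxima of $T_1$ from sitting inside the handle of $F\cap B_1$, in which case every level circle essential in $F\cap B_1$ carries points of $T_1$. You correctly identify this as the delicate point, but the splitting-disk idea you sketch does not resolve it: intersecting $F\cap B_1$ with a splitting disk for $(B_1,T_1)$ produces arcs and circles whose innermost pieces need not be essential in $F\cap B_1$, so you are back where you started.

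With the correct bound $|\gamma\cap K|\le 2$, your case ``$\gamma$ essential in $F$'' already finishes the proof exactly as the paper intends. Your treatment of the case ``$\gamma$ inessential in $F$'' is more explicit than the paper's, and that care is worthwhile; but it must be redone allowing $|\gamma\cap K|=2$. Then the disk $E'\subset F$ bounded by $\gamma$ contains an arc of $K$, and the isotopy of $E'$ across the ball bounded by $E\cup E'$ must carry that arc along. You then need to check not only that $|F\cap(D_1\cup D_2\cup D_3)|$ drops, but also that $|F\cap A|$ does not rise, since $E'$ may leave $V$; this is the secondary bookkeeping issue you already anticipated.
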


\begin{proof}
In Configuration (2), $F \cap \partial B_1$ consists of one loop $\beta$ satisfying $|\beta\cap K|=4$.
If $F\cap B_1$ was not a disk, then by Lemma \ref{Morse}, there exists an essential saddle point of $F\cap B_1$ and this shows that $r(F\cap B_1,T_1)\le 2$ as the proof of \cite[Theorem 1.2]{MO}.
\end{proof}

By Claim \ref{disk}, we have $|p|\le 1$.
This contradicts Claim \ref{p,q,r}.
Hence Configuration (2) does not exist.
Hereafter, we assume Configuration (1).
Since each component of $F\cap \partial V$ is a longitude of $V$ which intersects $\partial D_i$ in one point for each $i$, $F\cap \partial B_i$ has either following two cases.
See Figure \ref{case}.

\begin{itemize}
\item [(A)] $F\cap \partial B_i$ consists of mutually parallel loops whose two innermost loops $\beta_{i1},\ \beta_{i2}$ satisfying $|\beta_{ij}\cap K|=2$ for $j=1,2$.
\item [(B)] $F\cap \partial B_i$ consists of one loop $\beta_i$ satisfying $|\beta_i\cap K|=4$.
\end{itemize}

\begin{figure}[htbp]
	\begin{center}
	\begin{tabular}{cc}
	\includegraphics[trim=0mm 0mm 0mm 0mm, width=.3\linewidth]{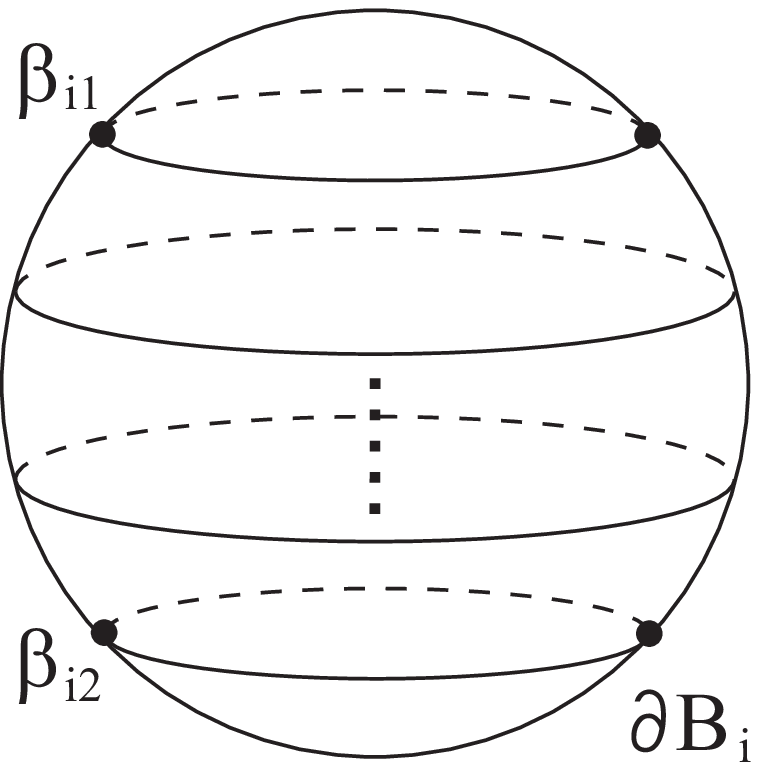}&
	\includegraphics[trim=0mm 0mm 0mm 0mm, width=.3\linewidth]{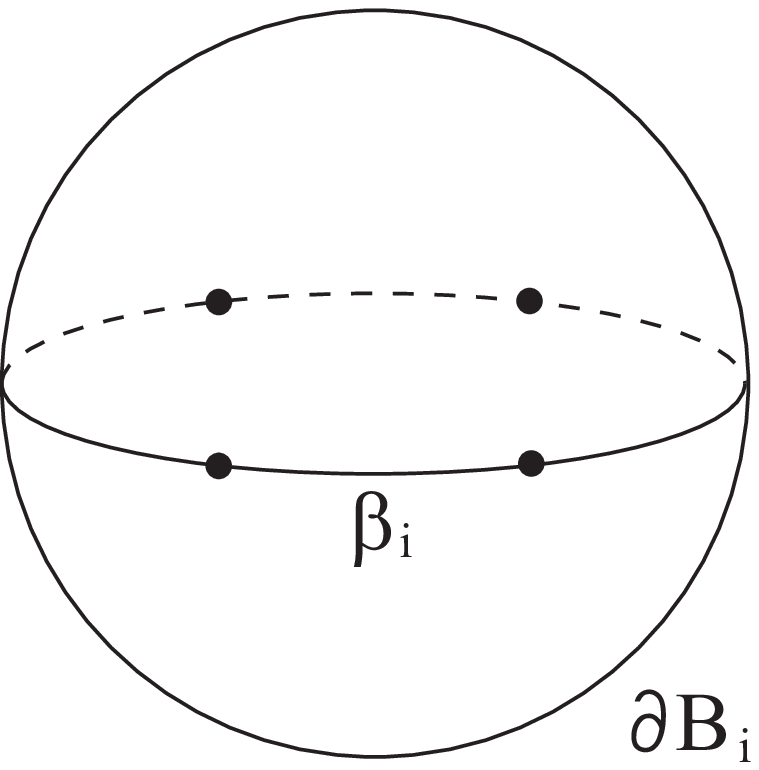}\\
	Case (A) & Case (B)\\
	\end{tabular}
	\end{center}
	\caption{Two cases for $F\cap \partial B_i$}
	\label{case}
\end{figure}

\begin{claim}
In either cases, $F\cap B_i$ consists of disks.
\end{claim}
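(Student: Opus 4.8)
The plan is to argue by contradiction, in close analogy with the treatment of Configuration~(2) in Claim~\ref{disk}. Suppose that some component $G$ of $F\cap B_i$ is not a disk. Since $F$ is connected and meets $\partial B_i$ (recall $K\subset F$ and $K$ meets each $D_j$ in two points), $G$ has non-empty boundary lying on $\partial B_i$, so $G$ is a properly embedded surface in the ball $B_i$ with $\partial G\subset\partial B_i$ and $\chi(G)\le 0$. First I would apply Lemma~\ref{Morse} to the trivial tangle $(B_i,T_i)$, choosing the standard Morse function $h$ on $B_i$ so that the two strands of $T_i$ each have a single maximum; this lets me isotope $(F\cap B_i,T_i)$ so that $F\cap B_i$ has no inessential saddle point. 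At this point I would need to check that the isotopy can be performed without destroying the configuration already established on $\partial B_i$ (or that it only simplifies it), so that Cases~(A) and~(B) are preserved.

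Next I would run a Morse-theoretic count on $G$: writing $\chi(G)$ as the alternating sum of the numbers of minima, saddles, and maxima of $h|_G$, a connected properly embedded surface in a ball with all of its boundary on the bounding sphere and with no saddle of $h$ is necessarily a disk. As $G$ is not a disk it must therefore carry a saddle point, and by the previous step this saddle is essential in $F$.

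Finally, an essential saddle point of $F\cap B_i$ yields --- exactly by the argument used in the proof of \cite[Theorem 1.2]{MO}, and invoked already in Claim~\ref{disk} --- a compressing disk $D$ for $F$ in $S^3$ with $|\partial D\cap K|\le 2$, whence $r(F,K)\le 2$, contradicting $r(F,K)=3$. Because this reasoning uses only that $G$ is a non-disk component together with the structure of $F$ on $\partial B_i$, it applies uniformly in Case~(A) and in Case~(B); hence every component of $F\cap B_i$ is a disk. The main obstacle I anticipate is this last step: one must verify that the compressing disk extracted from an essential saddle genuinely meets $K$ in at most two points. This is where Configuration~(1) is used --- the arcs of $F\cap D_i$ are mutually parallel and only the two outermost meet $K$, each in one point --- so that a level curve of $F$ just above or below the saddle meets $K$ in at most two points while being essential in $F$ precisely because the saddle is essential; the remainder is the bookkeeping of \cite[Theorem 1.2]{MO}.
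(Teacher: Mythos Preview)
Your Case~(B) argument is essentially the paper's: apply Lemma~\ref{Morse} and read off a compressing disk meeting $T_i$ in at most two points. The difficulty is Case~(A), and here your uniform Morse argument has a genuine gap.

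The problem is the clause ``essential in $F$ precisely because the saddle is essential''. Lemma~\ref{Morse} is stated for the properly embedded surface $F\cap B_i$ in the ball $B_i$, so after applying it the remaining saddles are essential \emph{in $F\cap B_i$}, and the level curve $c$ you extract is only known to be essential in $F\cap B_i$. In Case~(A) the surface $F\cap B_i$ has several boundary components, and all of the ``middle'' loops (those other than $\beta_{i1},\beta_{i2}$) miss $K$ entirely. Nothing you have said rules out a non-disk component $G$ of $F\cap B_i$ whose boundary consists of such middle loops; the essential curve $c$ produced by the saddle could then be a core parallel to one of these $\gamma_j$. If that $\gamma_j$ happens to be inessential in the closed surface $F$, so is $c$, and you get no compressing disk for $F$ at all. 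Your final paragraph tries to justify the bound $|\partial D\cap K|\le 2$ by invoking Configuration~(1) on $F\cap D_i$, but those arcs live on $\partial B_i$, whereas the level curve $c$ lies on a concentric sphere in the interior of $B_i$; Configuration~(1) does not control $c$ directly, and in any event it is the essentiality of $c$ in $F$, not the intersection count, that is unproven.

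This is exactly why the paper does \emph{not} run the Morse argument uniformly in Case~(A). Instead it first uses $r(F,K)=3$ to see that the innermost loop $\beta_{11}$ bounds a disk $\delta'$ in $F$, then takes an innermost loop of $\delta'\cap\partial B_1$ and argues (using that the algebraic tangle on the other side is essential) that this innermost disk must lie in $B_1$. From there it shows, one loop at a time and invoking the minimality of $|F\cap A|$ and $|F\cap D_i|$, that each boundary loop of $F\cap B_1$ except possibly $\beta_{12}$ already bounds a disk of $F\cap B_1$. Only after this reduction --- when the sole possibly non-disk component has the single boundary curve $\beta_{12}$ containing one string of $T_1$ --- does the paper invoke Lemma~\ref{Morse}, obtaining $r(K)\le 1$. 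Your proposal skips this innermost/minimality step, and without it the passage from ``essential in $F\cap B_i$'' to ``compressing disk for $F$'' is not justified.
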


\begin{proof}
In Case (A), without loss of generality, let $\delta$ be the innermost disk in $\partial B_1$ bounded by $\beta_{11}$.
Since $r(F,K)=3$, $\beta_{11}$ bounds a disk $\delta'$ in $F$.
Let $\gamma$ be a loop of $\delta'\cap \partial B_1$ which is innermost in $\delta'$, and $\epsilon$ be the corresponding innermost disk in $\delta'$.
Note that $\gamma$ is a loop of $F\cap \partial B_1$, and $\epsilon$ contains exactly one subarc of $K$ if and only if $\gamma$ is either $\beta_{11}$ or $\beta_{12}$.

If $\epsilon$ is contained in $B_2\cup B_3$, then an algebraic tangle $(B_2\cup B_3, T_2\cup T_3)$ is inessential.
It follows that $|q|\le 1$ or $|r|\le 1$ and it contradicts Claim \ref{p,q,r}.
Hence $\epsilon$ is contained in $B_1$.

If $\gamma=\beta_{11}$ and it bounds a disk $\epsilon$ in $B_1$, then a loop of $F\cap \partial B_i$ which is next to $\beta_{11}$ on $\partial B_i$, say $\beta'$, also bounds a disk $\epsilon'$ in $B_1$ such that $\epsilon'\cap F=\beta'$.
Since $r(F,K)=3$, $\beta'$ bounds a disk $\epsilon''$ in $F$ and there is an isotopy sending $\epsilon''$ to $\epsilon'$.
By the minimality of $|F\cap A|$ and $|F\cap D_i|$, $\beta'$ bounds a disk $\epsilon'$ of $F\cap B_1$.
Continuing this argument, each loop of $F\cap \partial B_1$ except for $\beta_{12}$ bounds a disk of $F\cap B_1$ in $B_1$.
Now we apply Lemma \ref{Morse} to $(B_1,T_1)$ and $F\cap B_1$.
If $\beta_{12}$ does not bound a disk of $F\cap B_1$, then by Lemma \ref{Morse}, there exists an essential saddle point of $F\cap B_1$ and this shows that $r(K)\le 1$, a contradiction.
Hence each loop of $F\cap \partial B_1$ bounds a disk of $F\cap B_1$ if $\gamma=\beta_{11}$.

If $\gamma\ne \beta_{1j}$, then by the same argument as above, we can show that each loop of $F\cap \partial B_1$ bounds a disk of $F\cap B_1$.

In Case (B), we apply Lemma \ref{Morse} to $(B_i,T_i)$ and $F\cap B_i$.
If $F\cap B_i$ was not a disk, then by Lemma \ref{Morse}, there exists an essential saddle point of $F\cap B_i$ and this shows that $r(F\cap B_1,T_1)\le 2$, a contradiction.
\end{proof}

We call $F\cap B_i$ {\em Type \rm{(A)}} or {\em Type \rm{(B)}} if $F\cap \partial B_i$ has Case (A) or Case (B) respectively.
See Figure \ref{type}.

\begin{itemize}
\item [(A)] $F\cap B_i$ consists of mutually parallel $k$-disks whose two outermost disks $E_{i1}$ and $E_{i2}$ contain one string of $T_i$.
\item [(B)] $F\cap B_i$ consists of one disk which contains two strings of $T_i$.
\end{itemize}

\begin{figure}[htbp]
	\begin{center}
	\begin{tabular}{cc}
	\includegraphics[trim=0mm 0mm 0mm 0mm, width=.3\linewidth]{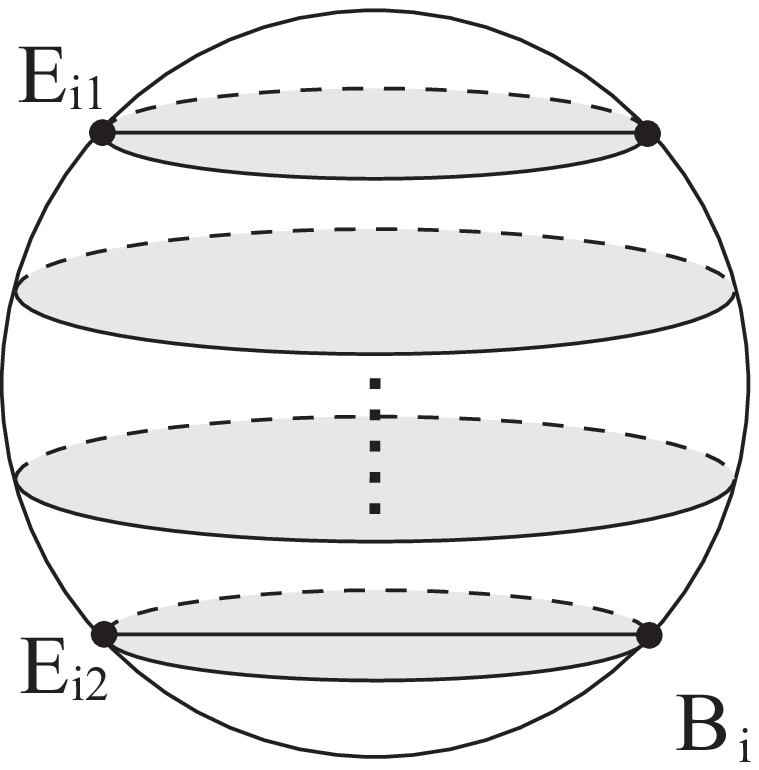}&
	\includegraphics[trim=0mm 0mm 0mm 0mm, width=.3\linewidth]{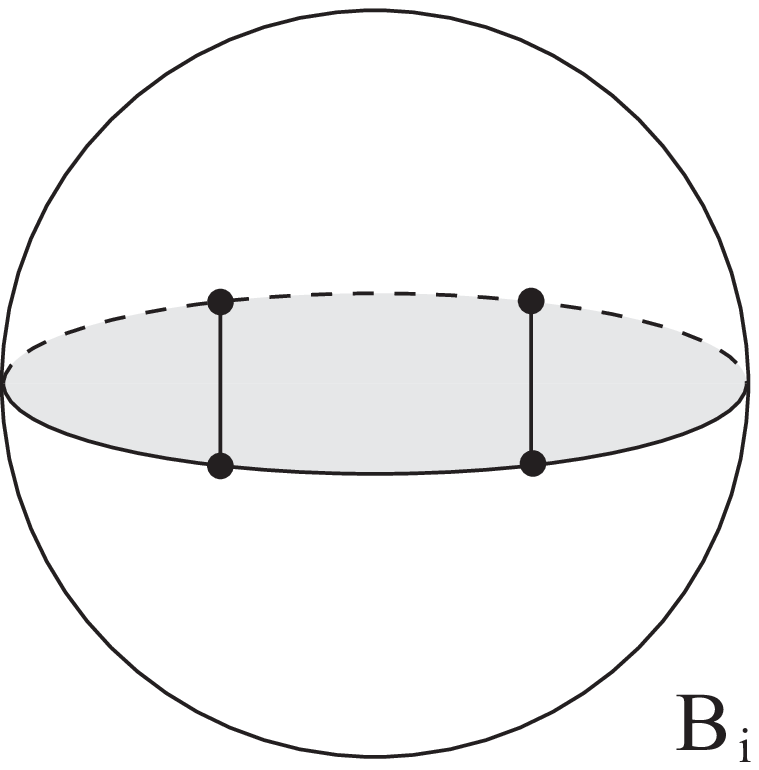}\\
	Type (A) & Type (B)\\
	\end{tabular}
	\end{center}
	\caption{Two types for $F\cap B_i$}
	\label{type}
\end{figure}

\begin{claim}\label{Type A}
For Type (A), the boundary slope of $F\cap B_i$ is equal to the rational tangle slope of $(B_i,T_i)$.
\end{claim}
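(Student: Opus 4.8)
The plan is to reduce the claim to a standard property of rational tangles and then to establish that property via the double branched cover. In Type (A) the surface $F\cap B_i$ is a family of mutually parallel disks, the two outermost of which, $E_{i1}$ and $E_{i2}$, each meet $T_i$ in a single string, the remaining disks being disjoint from $T_i$. Thus the essential components of $F\cap\partial B_i$ are exactly $\beta_{i1}=\partial E_{i1}$ and $\beta_{i2}=\partial E_{i2}$; these are isotopic (the $E_{ij}$ being parallel in $B_i$), and by definition the boundary slope of $F\cap B_i$ is their common slope on the four-punctured sphere $\partial B_i$. So it suffices to prove that a disk $E$ properly embedded in a rational tangle $(B,T)$ of slope $s$ and meeting $T$ in a single string has $\partial E$ of slope $s$.

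First I would observe that $\partial E$ is essential in $\partial B$ (with its four marked points $K\cap\partial B$): it passes through the two endpoints of the string of $T$ contained in $E$ and separates them from the other two marked points, so the associated slope is well defined, and $\partial E$ is not null-isotopic off the marked points --- otherwise $E$ would be a boundary-parallel disk displacing all of $T$ to one side, which for $(B_i,T_i)$ would force $|p|\le 1$ (resp. $|q|,|r|$) against Claim \ref{p,q,r}, or would contradict the minimality of $|F\cap A|$ as in Claims \ref{no arc}--\ref{no loop2}.

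The core step is to pass to the double branched cover $p\colon W\to B$ along $T$. Here $W$ is a solid torus, and by the standard identification of rational tangles with solid-torus fillings, the meridian slope of $W$ is carried by the covering $\partial W\to\partial B$ onto exactly the tangle slope $s$. Because $E$ meets the branch set $T$ in a single arc properly embedded in the disk $E$, an Euler characteristic computation shows that $p^{-1}(E)$ is again a single disk $\widetilde E$ in $W$, with $\partial\widetilde E=p^{-1}(\partial E)$ an essential simple closed curve on $\partial W$ (essential since $\partial E$ is). Hence $\widetilde E$ is a meridian disk of $W$, so $\partial\widetilde E$ is the meridian and $\partial E$ has slope $s$. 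Applying this to $E=E_{i1}$ gives the claim. One may instead avoid branched covers by realizing $(B_i,T_i)$ from an integral tangle through the sequence of twists whose continued fraction is $s$, checking that a string-trivializing disk is unique up to isotopy, and tracking that its boundary slope is transformed by the same $PSL_2(\mathbb{Z})$-action that defines $s$ and agrees with $s$ for the integral tangle.

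The step I expect to be the main obstacle is the slope bookkeeping: making precise the boundary slope of $F\cap B_i$ when $\beta_{i1}$ runs through two of the four branch points, and --- in the elementary approach --- the uniqueness up to isotopy of the string-trivializing disk, which is exactly where rationality of $(B_i,T_i)$ (rather than mere unknottedness of its strings) is used. The lifting of $E_{i1}$ and the identification of the meridian of $W$ with the tangle slope are routine once these conventions are fixed.
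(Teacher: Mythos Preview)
Your argument is essentially sound, but it is far more elaborate than the paper's. The paper's entire proof of this claim is the single sentence ``This claim follows the definition of the slope of rational tangle.'' The point is that in Type~(A) the disk $E_{i1}$ contains exactly one string of $T_i$, so that string is isotopic rel endpoints to an arc on $\partial B_i$ of the same slope as $\partial E_{i1}$; since the slope of a rational tangle is by definition the common slope of the boundary arcs to which its strings are parallel, this slope equals the boundary slope of $F\cap B_i$. No branched covers are invoked.

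Your double-branched-cover route does lead to the same conclusion, but one step needs repair. When $E$ \emph{contains} an arc of the branch locus $T$, the set-theoretic preimage $p^{-1}(E)$ is not an embedded disk: in the local model $(u,v,z)\mapsto (u^{2}-v^{2},\,2uv,\,z)$ with branch set $\{u=v=0\}$ and $E=\{y=0\}$, one gets $p^{-1}(E)=\{uv=0\}$, two sheets meeting transversely along the lift of the string. So the Euler-characteristic count you describe does not produce ``a single disk $\widetilde E$.'' The easy fix is to first push $E$ off the string it contains, obtaining a disk $E'\subset B$ disjoint from $T$ with $\partial E'$ of the same slope as $\partial E$; then $p^{-1}(E')$ is an honest unbranched double cover of a disk, hence two disjoint disks in the solid torus $W$, and since they cut $W$ into two balls (the branched double covers of the two one-string trivial tangles on either side of $E'$) they are meridian disks. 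That yields the slope identification you want. Your alternative via the $PSL_2(\mathbb{Z})$-action and continued fractions is also valid, and is really just an unpacking of what ``by definition'' means here; in either formulation you are re-proving the standard equivalence of the several definitions of rational-tangle slope rather than using it.
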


\begin{proof}
This claim follows the definition of the slope of rational tangle.
\end{proof}

\begin{claim}\label{slope}
For Type (B), let $1/m$ be the slope of a rational tangle and $a/b$ be the boundary slope of $F\cap B_i$.
Then one of the following conditions holds.
\begin{itemize}
\item [(I)] $am=b-1$.
\item [(II)] $a>1$ and $am=b+1$.
\end{itemize}
\end{claim}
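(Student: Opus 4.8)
The plan is to analyze Type (B) concretely by setting up coordinates on the 4-punctured sphere $\partial B_i$ and tracking how the disk $F\cap B_i$ (containing two strings of $T_i$) meets it. First I would recall the standard description of a rational tangle of slope $1/m$: the double branched cover of $(B_i,T_i)$ is a solid torus, and the two strings of $T_i$, together with two arcs on $\partial B_i$, bound in $B_i$ a natural spanning surface whose boundary we can draw explicitly. The disk $D=F\cap B_i$ has boundary $\partial D=\beta_i$, a loop on $\partial B_i$ meeting $K$ in four points; I would normalize so that $\beta_i$ separates the four punctures into two pairs in a way compatible with the tangle string endpoints, and record the isotopy class of $\beta_i$ on the 4-punctured sphere by its ``slope'' $a/b$ in the usual (Conway) parametrization.

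Next I would use the fact that $D$ is an \emph{embedded} disk in $B_i$ containing exactly the two strings of $T_i$, hence $D$ is (after isotopy) a meridian-type spanning disk for the tangle; the complement $B_i\setminus N(D)$ and the position of the two strings inside $D$ force a relation between the tangle slope $1/m$ and the boundary slope $a/b$. Concretely, lifting to the double branched cover, $D$ lifts to an annulus or a disk/Möbius configuration in the solid torus $\widetilde{B_i}$, and the condition ``$D$ is embedded and bicollared with the two strings lying on it'' translates into the requirement that $\partial\widetilde{D}$ runs $b$ times longitudinally and meets the meridian disk (which carries slope data $m$) in a controlled number of points. Carrying out this count gives $|am-b|=1$, i.e.\ $am=b\pm1$. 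The two sub-cases $am=b-1$ and $am=b+1$ then correspond to the two ways the strings can sit (which side of $D$ they bulge toward), and a short extra argument rules out $am=b+1$ when $a\le 1$: if $a=0$ then $b=\pm1$ and the loop is trivial, contradicting that it meets $K$ in four points essentially; if $a=1$ then $m=b+1$ forces the tangle slope to be realized by a boundary-parallel disk, again a degenerate configuration that one checks directly is incompatible with Type (B). This yields exactly conditions (I) and (II).

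The main obstacle I anticipate is making the bookkeeping in the double branched cover (or, equivalently, the direct picture on the 4-punctured sphere) precise enough to pin down the sign: it is easy to get $|am-b|=1$, but distinguishing the two cases and correctly attaching the side condition $a>1$ to the $+1$ case requires care with orientations and with which outermost-arc endpoints of the earlier Configuration (1) analysis are being connected inside $B_i$. I would handle this by fixing, once and for all, a reference rational tangle (say the integer tangle $m$ drawn as $m$ horizontal half-twists) and a reference position for $\beta_i$, computing both sub-cases on that model, and then invoking the standard fact that rational tangle surgery descriptions are classified by continued fractions so that the general case follows by applying the corresponding homeomorphism of $(B_i,\partial B_i)$ to the slope data. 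The remaining verifications—that the disk is essential, that the boundary slope is well-defined mod the framing, and that the degenerate slopes are excluded by Claim~\ref{p,q,r}—are routine.
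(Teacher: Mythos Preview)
Your plan is substantially more elaborate than what the paper does. The paper's entire proof is two sentences and a figure: it notes that on the disk $D=F\cap B_i$ the two strings of $T_i$ can sit in exactly two combinatorial ways relative to the four boundary points (which are labelled by the two adjacent decomposing disks $D_{i-1},D_i$), draws each arrangement for the model tangle $1/3$, and reads off the boundary slope directly from the picture as $a/(am+1)$ in one case and $a/(am-1)$ in the other. The side condition $a>1$ in case~(II) is simply visible in the figure. No covering spaces, no intersection-number count on a torus --- just the pillowcase picture.

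Your double-branched-cover route has two concrete issues that you would have to repair before it runs. First, you normalize so that $\beta_i$ ``separates the four punctures into two pairs,'' but in Type~(B) the loop $\beta_i$ \emph{passes through} all four tangle endpoints (indeed $|\beta_i\cap K|=4$, and these four points are exactly $\partial T_i$); so $\beta_i$ is not a curve on the four-punctured sphere in the usual sense and its ``slope'' is not the Conway slope of a separating curve. Second, since $D$ contains the branch arcs $T_i$, its preimage $\widetilde D$ in the solid torus is not a surface: locally near a branch arc it is two sheets meeting along a line, hence a singular $2$--complex rather than the annulus or M\"obius band you describe. You can fix both problems by pushing $T_i$ off $D$ to one side before lifting (and the choice of side is precisely what distinguishes (I) from (II)), but once you do that the computation reduces to the same direct picture the paper draws. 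The covering machinery therefore adds a translation layer without avoiding the two-case check that is the whole content of the argument.
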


\begin{proof}
There are two possibility of the arrangement of $T_i$ in $F\cap B_i$.
Depending on them, we have two conditions (I) and (II).
See Figure \ref{cond}.
We remark that it is required $a>1$ in Condition (II).
\end{proof}

\begin{figure}[htbp]
	\begin{center}
	\begin{tabular}{ccc}
	\includegraphics[trim=0mm 0mm 0mm 0mm, width=.2\linewidth]{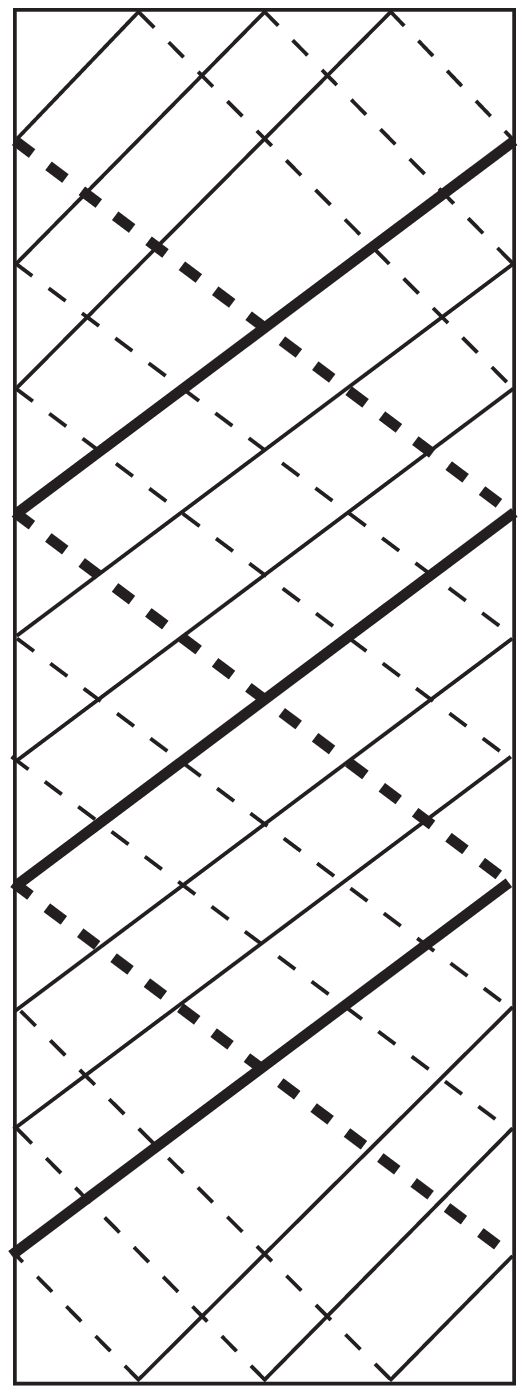}&
	\includegraphics[trim=0mm 0mm 0mm 0mm, width=.2\linewidth]{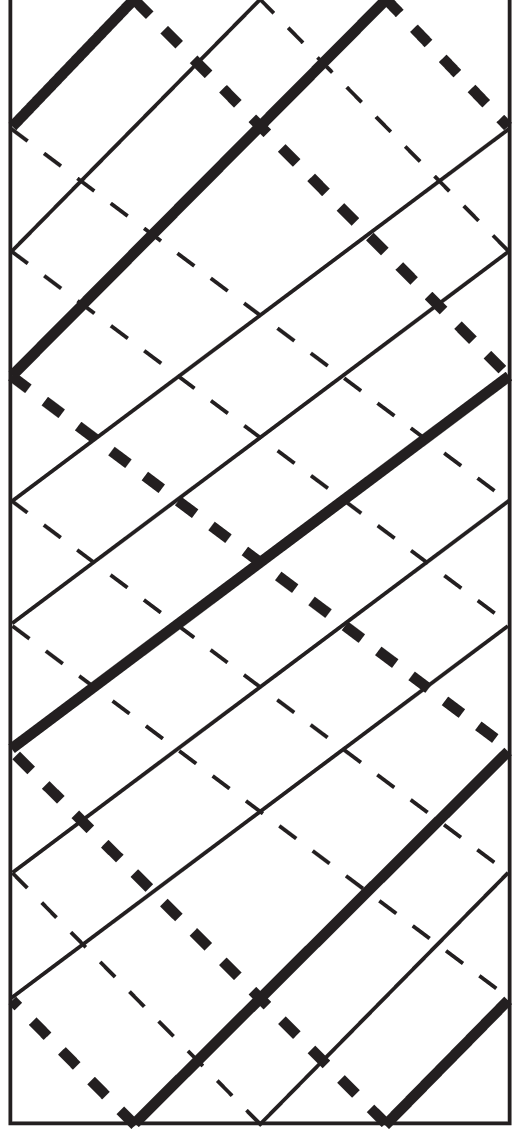}&
	\includegraphics[trim=0mm 0mm 0mm 0mm, width=.1\linewidth]{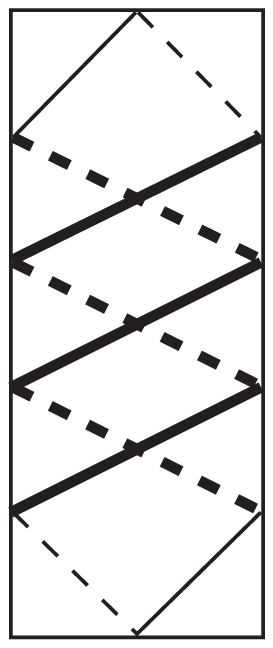}\\
	Condition (I) : slope $3/10$ & Condition (II) : slope $3/8$ & Condition (I) : slope $1/4$\\
	\end{tabular}
	\end{center}
	\caption{Two conditions for the boundary slope with the tangle slope $1/3$}
	\label{cond}
\end{figure}

\begin{claim}
In Claim \ref{slope}, $a=1$.
\end{claim}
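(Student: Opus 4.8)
The plan is to rule out $a\ge 2$, which leaves only Condition (I) with $a=1$.

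Write $E=F\cap B_i$ for the single disk of Type (B); thus $E$ is a spanning disk of the rational tangle $T_i$ and $\beta_i=\partial E$ is a loop on $\partial B_i$ of boundary slope $a/b$ running through the four points of $T_i\cap\partial B_i$. The first step is bookkeeping on $\partial B_i$: since every component of $F\cap\partial V$ is a longitude of $V$ meeting each $\partial D_j$ once, and $F\cap D_{i-1},F\cap D_i$ are parallel arc systems (Configuration (1)) with a common number $k$ of arcs, the way $\beta_i$ meets $\partial D_{i-1}$ and $\partial D_i$ is governed by $a/b$, so that $k$ is determined by $a$, $b$ and $m$. For a fixed tangle slope $1/m$, the value $a\ge 2$ forces $k$ to be strictly larger than it is when $a=1$; geometrically, $\beta_i$ (hence $E$) then winds around the twist region of $B_i$ more than a standard spanning disk would, so there are arcs of $F\cap D_{i-1}$ or $F\cap D_i$ disjoint from $K$ that are not innermost in their system.

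Assume $a\ge 2$. I would use such an ``excess'' $K$-disjoint arc of some $F\cap D_j$, together with a sub-disk of $E$ missing both strings of $T_i$ and an arc on $\partial V$, to build an embedded disk $\delta$ with $\partial\delta\subset F$ and $|\partial\delta\cap K|\le 2$; equivalently, $a\ge 2$ compels a loop of $F\cap\partial B_{i\pm 1}$ to bound a disk on the tangle side while meeting $K$ at most twice, which is exactly the type of configuration excluded by the essentiality and minimality arguments used earlier (in the spirit of Claims \ref{no loop} and \ref{no loop2}). If $\partial\delta$ is essential on $F$, then $\delta$ compresses $F$ with $|\partial\delta\cap K|\le 2$, so $r(F,K)\le 2$, against $r(F,K)=3$. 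If $\partial\delta$ bounds a disk $\delta'$ on $F$, then isotoping $\delta'$ across $\delta$ gives an isotopy of $F$; since $\delta$ is assembled from arcs of $F\cap(D_1\cup D_2\cup D_3)$ and of $F\cap\partial V$, this isotopy strictly reduces $|F\cap A|$ or $|F\cap(D_1\cup D_2\cup D_3)|$, contradicting the standing minimality assumptions. Either way we reach a contradiction, so $a=1$.

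The delicate point is the precise construction of $\delta$ — which of the excess strands created by $a\ge 2$ to use, and on which side of $B_i$, so that both the $K$-intersection count and the essential/minimal dichotomy above come out correctly. Everything else is the intersection-number bookkeeping of the first step together with isotopy arguments of the kind already carried out in Claims \ref{no arc}--\ref{no loop2}.
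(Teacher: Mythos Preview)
Your strategy is the right one---produce a disk with boundary on $F$ meeting $K$ at most twice---but the construction is the entire content of this claim, and you explicitly leave it undone (``the delicate point is the precise construction of $\delta$''). Worse, the ingredients you propose are not the ones that work: you speak of ``a sub-disk of $E$ missing both strings of $T_i$ and an arc on $\partial V$,'' but the needed disk is not a sub-disk of $E$, uses no arc on $\partial V$, and is not built from an arc of $F\cap D_j$ at all. Your first paragraph's bookkeeping about $k$ being ``forced larger'' when $a\ge 2$, and the detour through ``a loop of $F\cap\partial B_{i\pm 1}$ bounding a disk on the tangle side,'' are not the relevant mechanisms.

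Here is the construction you are missing. Because $E=F\cap B_i$ is a single disk, for each region $R$ of $D_i$ between two consecutive arcs of $E\cap D_i$ there is a disk $\delta\subset B_i$ with $\delta\cap R$ an arc joining those two consecutive arcs and $\delta\cap E=\partial\delta\setminus\mathrm{int}(\delta\cap R)$ (think of $\delta$ as a product disk between adjacent sheets of $E$). The hypothesis $a\ge 2$ is exactly what guarantees that $|\partial\delta\cap T_i|\le 1$ for \emph{every} such region $R$: with $a\ge 2$ the two strings of $T_i$ never lie in the same slot between consecutive sheets of $E$. Now pass to the adjacent tangle $(B_j,T_j)$ sharing the disk $D_i$; there one finds, for \emph{some} region $R$, an analogous disk $\delta'\subset B_j$ with $|\partial\delta'\cap T_j|=1$. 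Choosing $\delta$ over that same region and gluing along $R$ yields $\Delta=\delta\cup\delta'$ with $\partial\Delta\subset F$ and $|\partial\Delta\cap K|\le 2$, contradicting $r(F,K)=3$. The essential/inessential dichotomy you set up for $\partial\Delta$ is a reasonable precaution, though the paper simply asserts the contradiction without discussing it.
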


\begin{proof}
Since $F\cap B_i$ is a disk in $B_i$, for each region $R$ between two consecutive arcs of $(F\cap B_i)\cap D_i$, there exists a disk $\delta$ in $B_i$ such that $\delta\cap R$ is an arc connecting the two consecutive arcs and $\delta\cap (F\cap B_i)=\partial \delta - \text{int}(\delta\cap R)$.
If $a\ge 2$, then $|\partial \delta \cap T_i|\le 1$.

For the next tangle $(B_j,T_j)$ which has a common disk $D_i$, we have a similar disk $\delta'$ in $B_j$ as $\delta$, for {\em some} region between two consecutive arcs of $(F\cap B_i)\cap D_i$, such that $|\delta\cap T_j|=1$.

Then we have a disk $\Delta=\delta\cup\delta'$ such that $|\Delta\cap K|\le 2$, a contradiction.
\end{proof}

Hence, only (I) in Claim \ref{slope} occurs, and we have the following claim.

\begin{claim}\label{Type B}
For Type (B), the boundary slope of $F\cap B_i$ is equal to $1/(m+1)$ if the slope of a rational tangle $(B_i,T_i)$ is $1/m$.
\end{claim}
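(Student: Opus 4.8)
The plan is to deduce Claim \ref{Type B} directly from the two claims immediately preceding it, namely Claim \ref{slope} together with the (unlabelled) claim asserting that $a=1$ in the notation used there. No new geometric input is needed: the argument is a short arithmetic deduction, and the purpose of the claim is simply to record, alongside Claim \ref{Type A}, the precise boundary slope of each Type (B) piece $F\cap B_i$, so that later these slopes can be matched up across the meridian disks $D_1,D_2,D_3$ on the axis torus.

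Concretely, I would argue as follows. By the preceding claim the numerator of the boundary slope $a/b$ of $F\cap B_i$ satisfies $a=1$. Condition (II) of Claim \ref{slope} explicitly requires $a>1$, so it cannot occur; hence Condition (I) holds, i.e.\ $am=b-1$. Setting $a=1$ gives $m=b-1$, so $b=m+1$, and therefore the boundary slope of $F\cap B_i$ equals $a/b=1/(m+1)$ whenever the rational tangle $(B_i,T_i)$ has slope $1/m$.

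The only place where care is needed is the bookkeeping of conventions: one must check that the slope $1/m$ of the rational tangle in Claim \ref{slope} is normalized exactly as in the statement here (it is, by construction of the Montesinos decomposition), and that $a/b$ is written in lowest terms so that $1/(m+1)$ is genuinely the reduced boundary slope. Since $a=1$, the fraction $1/(m+1)$ is automatically in lowest terms, so this is not a real obstacle. In sum, I expect the whole proof to be essentially a one-line corollary of Claim \ref{slope} and the claim that $a=1$, with no step presenting genuine difficulty.
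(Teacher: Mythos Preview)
Your proposal is correct and matches the paper's approach exactly: the paper itself treats Claim \ref{Type B} as an immediate consequence, writing only ``Hence, only (I) in Claim \ref{slope} occurs, and we have the following claim'' before stating it, with no separate proof environment. Your argument (that $a=1$ rules out Condition (II), so $am=b-1$ gives $b=m+1$) is precisely the intended one-line deduction.
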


Since $F\cap \partial V$ consists of longitudes, the next equation holds.
\[
\displaystyle \frac{1}{p'}+\frac{1}{q'}+\frac{1}{r'}=0,\ \ \ \ \ \ (*)
\]
where $p'$, $q'$ and $r'$ denote the boundary slopes of $F\cap B_1$, $F\cap B_2$ and $F\cap B_3$ respectively.
Without loss of generality, we assume that $p'<0$ and $q',r'>0$.
We note that the common denominator in the equation (*) is equal to $|F\cap D_i|=p'|F\cap B_1|=q'|F\cap B_2|=r'|F\cap B_3|$.
In the following, we will give a condition for $p',q',r'$.

\begin{lemma}\label{equation}
Generally, suppose that
\[
\displaystyle -\frac{1}{a}+\frac{1}{b}+\frac{1}{c}=0\ (2\le a < b \le c).
\]
Then there exists a pair of coprime integers such that $k<l\le 2k$ and an integer $d>0$ such that 
\[
a=k(l-k)d,\ b=l(l-k)d,\ c=kld.
\]
\end{lemma}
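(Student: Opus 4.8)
The equation $-\frac{1}{a}+\frac{1}{b}+\frac{1}{c}=0$ is equivalent to $\frac{1}{a}=\frac{1}{b}+\frac{1}{c}$, i.e. $bc=a(b+c)$. The plan is to parametrise all integer solutions of this Diophantine equation using the standard trick for equations of the form $\frac1a=\frac1b+\frac1c$. First I would set $g=\gcd(b,c)$ and write $b=g\beta$, $c=g\gamma$ with $\gcd(\beta,\gamma)=1$. Substituting into $bc=a(b+c)$ gives $g^2\beta\gamma=ag(\beta+\gamma)$, hence $g\beta\gamma=a(\beta+\gamma)$. Since $\gcd(\beta,\gamma)=1$ we also have $\gcd(\beta\gamma,\beta+\gamma)=1$, so $(\beta+\gamma)\mid g$; write $g=(\beta+\gamma)d$ for some positive integer $d$. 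Then $a=\beta\gamma d$, $b=\beta(\beta+\gamma)d$, $c=\gamma(\beta+\gamma)d$.

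Next I would translate the inequalities $2\le a<b\le c$ into constraints on $\beta,\gamma,d$. From $a<b$ we get $\beta\gamma d<\beta(\beta+\gamma)d$, i.e. $\gamma<\beta+\gamma$, which is automatic. From $b\le c$ we get $\beta(\beta+\gamma)d\le\gamma(\beta+\gamma)d$, i.e. $\beta\le\gamma$. To match the statement's notation I would then set $k=\beta$ and $l=\beta+\gamma$, so that $\gamma=l-k$; the coprimality $\gcd(\beta,\gamma)=1$ becomes $\gcd(k,l-k)=1$, which is equivalent to $\gcd(k,l)=1$. The condition $\beta\le\gamma$ becomes $k\le l-k$, i.e. $l\ge 2k$; wait — the statement asks for $l\le 2k$, so I would instead set $k=\gamma$ and $l=\beta+\gamma$, giving $\beta=l-k$, $\gcd(k,l)=1$, and $\beta\le\gamma$ becomes $l-k\le k$, i.e. $l\le 2k$. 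With this labelling $a=\beta\gamma d=(l-k)kd=k(l-k)d$, $b=\beta(\beta+\gamma)d=(l-k)ld=l(l-k)d$, and $c=\gamma(\beta+\gamma)d=kld$, exactly as claimed. Finally $l<2k$ strictly versus $l=2k$: when $l=2k$ coprimality forces $k=1$, giving $a=1$, which violates $a\ge2$; so in fact $k<l<2k$ here, but the stated bound $l\le2k$ is still correct as an upper bound and I would just record $k<l\le 2k$ to match the statement. One should also check $2\le a$ is consistent: $a=k(l-k)d\ge 1\cdot 1\cdot 1=1$, with equality forced to not occur by the $a\ge2$ hypothesis, which simply rules out the degenerate parameter values; no contradiction arises.

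The only mildly delicate point is the number-theoretic step $(\beta+\gamma)\mid g$, which rests on $\gcd(\beta\gamma,\beta+\gamma)=1$: if a prime $\pi$ divides $\beta+\gamma$ and $\beta\gamma$, then $\pi\mid\beta$ or $\pi\mid\gamma$, and combined with $\pi\mid(\beta+\gamma)$ this forces $\pi$ to divide both, contradicting $\gcd(\beta,\gamma)=1$. Everything else is bookkeeping with the inequalities and a relabelling of variables, so I do not anticipate a serious obstacle; the main thing to be careful about is getting the final $k<l\le 2k$ orientation right, which is why I would fix the labelling $k=\gamma$, $l=\beta+\gamma$ from the outset.
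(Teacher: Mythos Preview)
Your argument is correct and uses the same gcd-extraction idea as the paper, though from a different pair: you factor out $g=\gcd(b,c)$, whereas the paper factors out $m=\gcd(a,b)$, writing $a=km$, $b=lm$ with $\gcd(k,l)=1$, then asserting $(l-k)\mid m$ and setting $d=m/(l-k)$. Your version is arguably cleaner because you actually justify the key divisibility step ($\gcd(\beta\gamma,\beta+\gamma)=1$), which the paper leaves implicit; after the relabelling $k=\gamma$, $l=\beta+\gamma$ the two parametrisations coincide.

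One correction to your side remark: when $l=2k$, coprimality does force $k=1$, $l=2$, but then $a=k(l-k)d=d$, not $a=1$. For $d\ge 2$ this is a legitimate solution with $a=d$, $b=c=2d$, satisfying all the hypotheses $2\le a<b\le c$. So your parenthetical claim that ``in fact $k<l<2k$ here'' is false, and you should not discard the boundary case: it is precisely the case treated in the paper's Claim~\ref{Type 1}, giving the family $(p,q,r)=(-d,2d-1,2d-1)$ that includes the $(-2,3,3)$-pretzel knot. The lemma as stated (with $l\le 2k$) is what you proved and what is needed.
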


\begin{proof}
Put $a=km$, $b=lm$, where $k<l$ are coprime integers.
Then
\[
\displaystyle -\frac{1}{km}+\frac{1}{lm}+\frac{1}{c}=0
\]
\[
\displaystyle -\frac{l-k}{klm}+\frac{1}{c}=0
\]
Since $l-k$ divides $m$, putting $d=m/(l-k)$,
\[
\displaystyle -\frac{1}{kld}+\frac{1}{c}=0
\]
Hence, $a=k(l-k)d$, $b=l(l-k)d$ and $c=kld$.
Moreover, since $b\le c$, we have $l\le 2k$.
\end{proof}

\begin{claim}\label{Type 1}
If $l=2k$, then there exists a positive integer $d$ such that $(p,q,r)=(p',q'-1,r'-1)=(-d,2d-1,2d-1)$.
Here, $F\cap B_1$ is of Type (A) with $|F\cap B_1|=2$, and $F\cap B_2$ and $F\cap B_3$ are of Type (B).
\end{claim}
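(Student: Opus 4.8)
The plan is to feed the equation $(*)$ into Lemma \ref{equation} and then read the resulting arithmetic back off the combinatorics of the pieces $F\cap B_i$. First I would apply Lemma \ref{equation} to $(*)$ with $a=|p'|$ and $\{b,c\}=\{q',r'\}$, $b\le c$. This is legitimate: from $1/|p'|=1/q'+1/r'$ and $q',r'>0$ we get $|p'|<q',r'$, so $a<b\le c$; and the sub-case $|p'|=1$, which would give $q'=r'=2$, is easily excluded, since the count below would then force $F\cap B_1$ to be of Type (A), whence $|p|=1$ by Claim \ref{Type A}, contradicting Claim \ref{p,q,r}. The hypothesis $l=2k$ together with $\gcd(k,l)=1$ forces $k=1$ and $l=2$, so Lemma \ref{equation} produces a positive integer $d$ with $a=d$ and $b=c=2d$, i.e.
\[
p'=-d,\qquad q'=r'=2d .
\]

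Next I would invoke the identity $|F\cap D_i|=|p'|\,|F\cap B_1|=|q'|\,|F\cap B_2|=|r'|\,|F\cap B_3|$ recorded just after $(*)$. Substituting the values above and cancelling $d$ gives $|F\cap B_1|=2|F\cap B_2|=2|F\cap B_3|$; in particular $|F\cap B_2|=|F\cap B_3|$, and $|F\cap B_1|$ is even, hence $\ge 2$. Since $F\cap B_i$ is of Type (B) exactly when it is a single disk and of Type (A) exactly when it consists of at least two parallel disks (the two outermost disks of a Type (A) piece are distinct and between them carry the two strings of $T_i$), it follows that $F\cap B_1$ is of Type (A), and Claim \ref{Type A} then gives $p=p'=-d$.

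It remains to pin down $|F\cap B_2|=|F\cap B_3|=1$. Suppose instead $|F\cap B_2|=|F\cap B_3|\ge 2$. Then $F\cap B_2$ and $F\cap B_3$ are also of Type (A), so Claim \ref{Type A} gives $q=q'=2d$ and $r=r'=2d$. But then two of the entries of $(p,q,r)=(-d,2d,2d)$ are even, so the pretzel link $(p,q,r)$ has more than one component, contradicting that $K$ is a knot. Hence $|F\cap B_2|=|F\cap B_3|=1$, so both pieces are of Type (B), and Claim \ref{Type B} gives $q=q'-1=2d-1$ and $r=r'-1=2d-1$; moreover $|F\cap B_1|=2|F\cap B_2|=2$. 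Therefore $(p,q,r)=(p',q'-1,r'-1)=(-d,2d-1,2d-1)$, and $d\ge 2$ since $|p|=d\ge 2$ by Claim \ref{p,q,r}, which is exactly the assertion.

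The chain of deductions is essentially forced once Lemma \ref{equation} is available; the two points that need care are the bookkeeping of signs (the equation $(*)$ is written with $1/p',1/q',1/r'$ while $p'<0$, $q',r'>0$) and the recognition that the competing arithmetic solution $(-d,2d,2d)$ is ruled out not by any surface-theoretic obstruction but simply because it is a multi-component pretzel link rather than a knot — and it is precisely this that forces $F\cap B_2$ and $F\cap B_3$ to be of Type (B), consisting of a single disk each.
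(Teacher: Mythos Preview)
Your argument is correct and follows essentially the same route as the paper: deduce $k=1$, $l=2$ from coprimality, read off $(|p'|,q',r')=(d,2d,2d)$ via Lemma~\ref{equation}, then use the knottedness of $K$ to rule out the Type~(A) option $q=r=2d$ and force $F\cap B_2$, $F\cap B_3$ to be Type~(B), whence $|F\cap B_1|=2$ and Type~(A). You are in fact more careful than the paper in two places: you verify the hypothesis $a\ge 2$ of Lemma~\ref{equation} by separately disposing of $|p'|=1$, and you make explicit the count identity $|F\cap B_2|=|F\cap B_3|$ that excludes the mixed case (one Type~(A), one Type~(B)) before invoking knottedness.
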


\begin{proof}
If $l=2k$, then $l=2$, $k=1$ since $l$ and $k$ are coprime.
Then we have $a=d$, $b=2d$ and $c=2d$ in Lemma \ref{equation}.
Since $K$ is a knot, it follows that $q=2d-1$ and $r=2d-1$, thus $F\cap B_2$ and $F\cap B_3$ are of Type (B) by Claim \ref{Type B}.
Then the common denominator in the equation (*) must be $2d$, $F\cap B_1$ is of Type (A) with $|F\cap B_1|=2$.
\end{proof}

\begin{example}
A $(-2,3,3)$-pretzel knot is of this type in Claim \ref{Type 1}.
\end{example}

\begin{claim}\label{Type 2}
If $l<2k$, then there exists positive integers $k,d$ such that $(p,q,r)=(p',q',r'-1)=(-kd,(k+1)d, k(k+1)d-1)$.
Here, $F\cap B_1$ is of Type (A) with $|F\cap B_1|=k+1$, $F\cap B_2$ is of Type (A) with $|F\cap B_2|=k$, and $F\cap B_3$ is of Type (B).
\end{claim}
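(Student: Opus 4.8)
The plan is to reproduce, in the case $l<2k$, the argument used for Claim \ref{Type 1}. First I would apply Lemma \ref{equation} to the triple $(a,b,c)=(|p'|,q',r')$, where the labels are chosen so that $q'\le r'$ (permissible since $P(p,q,r)$ is unchanged by permuting $p,q,r$); from $1/|p'|=1/q'+1/r'$ one gets $|p'|<q'$, and in the present case $l<2k$ the lemma in fact forces $q'<r'$. So there are coprime integers $k<l<2k$ and an integer $d>0$ with
\[
|p'|=k(l-k)d,\qquad q'=l(l-k)d,\qquad r'=kld,
\]
and $k\ge 2$ automatically (since $k<l<2k$ has no solution for $k=1$).

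Next I would pin down the types of $F\cap B_1$, $F\cap B_2$, $F\cap B_3$. If all three were of Type (A), then by Claim \ref{Type A} the boundary slopes would coincide with the tangle slopes, so $(p,q,r)=(p',q',r')=(-k(l-k)d,\,l(l-k)d,\,kld)$; but a parity check shows at least two of these are even (if $d$ is even then $p,q,r$ are all even; if $l-k$ is even then $p$ and $q$ are even; and if both $d$ and $l-k$ are odd then exactly one of $k,l$ is even, and since $k$ is a common factor of $p$ and $r$ while $l$ is a common factor of $q$ and $r$, two of $p,q,r$ are again even), so $K$ would be a link, a contradiction. Hence some $F\cap B_j$ is of Type (B), i.e.\ $|F\cap B_j|=1$. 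Since $|F\cap D_i|=|p'|\cdot|F\cap B_1|=q'\cdot|F\cap B_2|=r'\cdot|F\cap B_3|$, the number $|F\cap D_i|$ is a common multiple of $|p'|,q',r'$, and $|F\cap B_j|=1$ makes it equal to the $j$-th denominator; being a common multiple, this denominator must be the largest one, which here is $r'$ (as $|p'|<q'<r'$), so $j=3$ and $|F\cap D_i|=r'$. Then $|F\cap B_1|=r'/|p'|=l/(l-k)$ and $|F\cap B_2|=r'/q'=k/(l-k)$ are integers, forcing $(l-k)\mid\gcd(k,l)=1$, hence $l-k=1$; consequently $|F\cap B_1|=k+1\ge 3$ and $|F\cap B_2|=k\ge 2$, so both are of Type (A) of the claimed sizes, while $F\cap B_3$ is of Type (B).

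Finally, with $l-k=1$ one has $|p'|=kd$, $q'=(k+1)d$, $r'=k(k+1)d$; Claim \ref{Type A} applied to $B_1,B_2$ gives $p=p'=-kd$ and $q=q'=(k+1)d$, while Claim \ref{Type B} applied to the Type-(B) ball $B_3$ (tangle slope $1/r$, boundary slope $1/(r+1)=1/r'$) gives $r=r'-1=k(k+1)d-1$, which is exactly the asserted conclusion (in particular $k=2$, $d=1$ yields the $(-2,3,5)$-pretzel knot). I expect the main obstacle to be ruling out the ``all Type (A)'' configuration: one must trust that Claim \ref{Type A} genuinely identifies the boundary slopes with the tangle slopes there, and then carry out the parity bookkeeping — elementary, but the only place where a case might be overlooked, and (exactly as in Claim \ref{Type 1}) the only place where the hypothesis that $K$ is a knot re-enters after Claim \ref{p,q,r}.
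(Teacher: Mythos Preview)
Your proof is correct and follows the same approach as the paper: rule out the all--Type~(A) configuration by a parity argument (the paper organizes this as a case split on the parity of $l-k$, whereas you handle all parities in one pass), identify $F\cap B_3$ as the unique Type~(B) piece from the ordering $|p'|<q'<r'$, and then deduce $l-k=1$ from the resulting divisibility constraints. Your phrasing of the last step via the integrality of $l/(l-k)$ and $k/(l-k)$ is slightly more explicit than the paper's ``common denominator'' formulation, but the content is identical.
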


\begin{proof}
If $l-k$ is even, then both of $a$ and $b$ are also even.
Since $b<c$, both of $F\cap B_1$ and $F\cap B_2$ are of Type (A) to have a common denominator in the equation (*).
Thus by Claim \ref{Type A}, the slopes of rational tangles $(B_1,T_1)$ and $(B_2,T_2)$ coincide with those boundary slopes of $F\cap B_1$ and $F\cap B_2$.
Therefore, $p$ and $q$ are even, this shows that $K$ has more than two components.

Hence $l-k$ is odd, and one of $l$ and $k$ is even.
It follows that one of $a$ and $b$ is even and $c$ is even.
Since $a<b<c$ and $K$ has exactly one component, $F\cap B_3$ is of Type (B) by Claim \ref{Type B}.
Then,
\[
\displaystyle -\frac{1}{k(l-k)d}+\frac{1}{l(l-k)d}+\frac{1}{kld}=0
\]
\[
\displaystyle -\frac{l-k}{kl(l-k)d}+\frac{1}{kld}=0
\]
Since $|F\cap D_2|=kl(l-k)d=kld$, $l-k=1$ and $l=k+1$.
Hence $a=kd$, $b=(k+1)d$ and $c=k(k+1)d$, and $F\cap B_1$ is of Type (A) with $|F\cap B_1|=k+1$, $F\cap B_2$ is of Type (A) with $|F\cap B_2|=k$ and $F\cap B_3$ is of Type (B).
\end{proof}

\begin{example}
A $(-2,3,5)$-pretzel knot is of this type in Claim \ref{Type 2}, where $k=2$ and $d=1$.
\end{example}

\begin{claim}
In Claim \ref{Type 2}, $d=1$.
\end{claim}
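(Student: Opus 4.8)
The plan is to argue by contradiction: assuming $d\ge 2$, I will produce a disk disjoint from $K$ whose boundary lies on $F$ and whose interior misses $F$, and show this cannot exist for a surface with $r(F,K)=3$. Note first that in Claim \ref{Type 2} we are in the case $l<2k$, so $k\ge 2$ and hence $kd=|p|\ge 4$.

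The heart of the matter is the combinatorics of $F\cap D_1$, the meridian disk shared by the two Type (A) tangles $B_1$ and $B_2$. By Claim \ref{Type 2} this is a family of $k(k+1)d$ mutually parallel arcs, exactly the two outermost of which meet $K$, each once, at a point where a string of $T_1$, equivalently of $T_2$, pierces $D_1$. Seen from $B_1$ the arcs are distributed among the $k+1$ parallel disks of $F\cap B_1$, $kd$ on each; seen from $B_2$ they are distributed among the $k$ parallel disks of $F\cap B_2$, $(k+1)d$ on each; and in both pictures one of the two outermost arcs, call it $\alpha$, lies on the outermost disk --- $E_{11}$ from the $B_1$ side, $E_{21}$ from the $B_2$ side --- carrying the relevant string, so the group of $kd$ arcs on $E_{11}$ is contained in the group of $(k+1)d$ arcs on $E_{21}$. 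Since $kd\ge 4$, that $kd$-arc group contains at least three arcs other than $\alpha$, so one may choose among them two arcs $\beta,\beta'$ bounding a band $R$ in the disk $E_{11}$ with $R\cap K=\emptyset$; as $\beta,\beta'$ also lie on $E_{21}$, there is a corresponding band $R'$ in $E_{21}$ with $R'\cap K=\emptyset$. The band-disk construction from the proof that $a=1$, applied to $R$ and to $R'$, produces disks $\delta\subset B_1$ and $\delta'\subset B_2$, glued along an arc of $D_1$ running between $\beta$ and $\beta'$ and hence disjoint from $K$, whose union $\Delta$ satisfies $\partial\Delta\subset F$, $\text{int}\,\Delta\cap F=\emptyset$ and $\Delta\cap K=\emptyset$.

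Such a $\Delta$ cannot exist. If $\partial\Delta$ is essential in $F$ then $r(F,K)=0$, contradicting $r(F,K)=3$. If $\partial\Delta$ bounds a disk $\Delta''$ in $F$, then $\Delta''\cap K=\emptyset$ --- otherwise $K$ would lie on the disk $\Delta''$ and be unknotted, forcing $r(K)\le 1$ --- so $\Delta\cup\Delta''$ bounds a ball meeting $K$ nowhere, and isotoping $F$ across this ball lowers $|F\cap(D_1\cup D_2\cup D_3)|$, contradicting the minimality assumed at the start of the proof of Theorem \ref{main}. Hence $d=1$.

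The crux is the bookkeeping in the middle paragraph: one must determine precisely how the $k(k+1)d$ arcs of $F\cap D_1$ lie on the parallel disks on each side --- here the boundary slopes $p'=-kd$, $q'=(k+1)d$ from Claim \ref{Type 2} and the placement of the strings on the outermost disks are used --- and then check that when $kd\ge 4$ the arcs $\beta,\beta'$ can be chosen with both $R$ and $R'$ disjoint from $K$. When $(k,d)=(2,1)$ one has $kd=2$ and no such pair exists, in agreement with the fact that this same surface is the genus-one surface realizing representativity $3$; so the argument is sharp. The remaining ingredients are exactly the band-disk machinery already set up for the claim that $a=1$.
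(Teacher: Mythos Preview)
Your argument has a genuine gap in the middle paragraph, at two linked places.

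First, the containment assertion ``the group of $kd$ arcs on $E_{11}$ is contained in the group of $(k+1)d$ arcs on $E_{21}$'' is a non sequitur. All you have established is that the single outermost arc $\alpha$ lies on both $E_{11}$ and $E_{21}$; nothing forces the remaining $kd-1$ arcs of $\partial E_{11}\cap D_1$ to coincide with arcs of $\partial E_{21}\cap D_1$. In fact the $k+1$ boundary curves $\partial E_{1j}$ are parallel on the sphere $\partial B_1$, so as one moves across $D_1$ their arcs are interleaved, and likewise for the $k$ curves $\partial E_{2j}$ on $\partial B_2$; there is no reason for the two interleaving patterns (with periods $k+1$ and $k$) to nest.

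Second, and more seriously, the ``band-disk construction from the proof that $a=1$'' only works when the two chosen arcs are \emph{consecutive on $D_1$}: in that proof the region $R$ lies in $D_1\setminus F$, so the arc of $D_1$ along which $\delta$ and $\delta'$ are glued is disjoint from $F$. Your $\beta,\beta'$ are chosen on $E_{11}$ (and allegedly on $E_{21}$), but between them on $D_1$ sit arcs coming from the other disks $E_{1j}$ of $F\cap B_1$. Hence the arc on $D_1$ joining $\beta$ to $\beta'$ crosses $F$, and the resulting $\Delta=\delta\cup\delta'$ does \emph{not} satisfy $\mathrm{int}\,\Delta\cap F=\emptyset$. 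The bands $R\subset E_{11}$ and $R'\subset E_{21}$ you describe lie in $F$ itself and do not yield the needed disks.

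The paper's proof avoids both problems by working the other way round: it fixes a pair of \emph{consecutive} arcs on $D_1$ --- specifically the $k(k+1)$-th and $(k(k+1)+1)$-th --- so that the region between them is genuinely a component of $D_1\setminus F$, and then argues that at this particular position the disks $\delta\subset B_1$ and $\delta'\subset B_2$ each meet $K$ exactly once. Gluing gives a compressing disk $\Delta$ with $|\partial\Delta\cap K|=2$, contradicting $r(F,K)=3$. The target is $|\partial\Delta\cap K|\le 2$, not $0$; aiming for $0$ is what forced you into the non-consecutive choice.
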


\begin{proof}
Suppose $d>1$.
There are $k(k+1)d$-arcs of $F\cap D_1$.
Let $\alpha$ be the $k(k+1)$-th arc and $\alpha'$ be the $k(k+1)+1$-th arc, and $R$ be the region of $D_1$ between $\alpha$ and $\alpha'$.
Then there exists a disk $\delta$ in $B_1$ such that $\delta\cap (F\cap B_1)$ is an subarc $\beta$ of $\partial \delta$ intersecting $T_1$ in one point and $\delta\cap R$ is an arc $\partial \delta -\text{int}\beta$ connecting $\alpha$ and $\alpha'$.
There also exists a disk $\delta'$ in $B_2$ such that $\delta'\cap (F\cap B_2)$ is an subarc $\beta'$ of $\partial \delta'$ intersecting $T_2$ in one point and $\delta'\cap R$ is an arc $\partial \delta' -\text{int}\beta'$ connecting $\alpha$ and $\alpha'$.
Then a disk obtained from $\delta$ and $\delta'$ gives a compressing disk for $F$ whose boundary intersects $K$ in two points.
This shows that $r(F,K)\le 2$, a contradiction.
\end{proof}

Therefore, we have two Types for the triple $(p,q,r)$.

\begin{itemize}
\item [(1)] $(p,q,r)=(-d,2d-1,2d-1)$ for some integer $d\ge 2$.
\item [(2)] $(p,q,r)=(-k,(k+1), k(k+1)-1)$ for some integer $k\ge 2$.
\end{itemize}

\begin{claim}
In Type (1), $d=2$.
\end{claim}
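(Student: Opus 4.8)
We want to rule out $d\ge 3$, so assume $d\ge 3$ and aim to produce a compressing disk for $F$ meeting $K$ in at most two points, contradicting $r(F,K)=3$; the argument follows the pattern of the preceding claim. In Type (1) we have $F\cap B_1=E_{11}\cup E_{12}$, two parallel disks each containing one string of $T_1$ and of boundary slope $1/p'=-1/d$, while $F\cap B_2$ is a single disk $E_2$ of Type (B), containing both strings of $T_2$ and of boundary slope $1/q'=1/(2d)$. Hence $F\cap D_1$ consists of $2d\ (\ge 6)$ mutually parallel arcs, only the two outermost of which meet $K$, and these $2d$ arcs are shared between only the two disks $E_{11},E_{12}$ on the $B_1$ side (by parallelism, $d$ on each) and all lie on $\partial E_2$ on the $B_2$ side. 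One may also note that $\chi(F)=4-2d$, so $F$ has genus $d-1$; for $d=2$ this is an unknotted torus carrying the $(3,4)$-torus knot, while for $d\ge 3$ it is the larger genus that creates room for the extra compression found below.

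The plan is to locate two consecutive arcs $\alpha,\alpha'$ of $F\cap D_1$ that lie on the boundary of the same disk of $F\cap B_1$, say $E_{11}$, such that $\alpha$ and $\alpha'$ cobound in $E_{11}$ a subarc $\beta$ with $|\beta\cap T_1|\le 1$ and cobound in $E_2$ a subarc $\beta'$ with $|\beta'\cap T_2|\le 1$. Letting $R$ be the region of $D_1$ between $\alpha$ and $\alpha'$, one gets, exactly as in the proof of the previous claim, a disk $\delta\subset B_1$ with $\delta\cap(F\cap B_1)=\beta$ a subarc of $\partial\delta$ and $\delta\cap D_1=\partial\delta-\mathrm{int}\,\beta$ an arc in $R$ joining $\alpha$ to $\alpha'$, and similarly a disk $\delta'\subset B_2$ built from $\beta'$ and an arc in $R$. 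Then $\Delta=\delta\cup\delta'$ is an embedded disk with $\partial\Delta\subset F$ and $|\partial\Delta\cap K|\le 2$. If $\partial\Delta$ is essential in $F$, this already gives $r(F,K)\le 2$, a contradiction. If instead $\partial\Delta$ bounds a disk $\Delta''$ in $F$, then $\Delta\cup\Delta''$ is a sphere across which $F$ can be isotoped so as to reduce $|F\cap(D_1\cup D_2\cup D_3)|$, contradicting the minimality assumption. Either way we reach a contradiction, so $d\le 2$, and since $d\ge 2$ in Type (1) we conclude $d=2$.

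The one genuinely delicate step is the existence of the pair $\alpha,\alpha'$. This requires analysing the cyclic order in which the $2d$ arcs of $F\cap D_1$ appear along $\partial E_{11}$ and along $\partial E_2$, together with the positions of the endpoints of the string of $T_1$ in $E_{11}$ and of the strings of $T_2$ in $E_2$, and checking that when $d\ge 3$ there is a pair of consecutive arcs each of whose two sides (in $E_{11}$ and in $E_2$) is separated from all but at most one string. For $d=2$ there are too few arcs for such a choice, which is consistent with the fact that the $(-2,3,3)$-pretzel knot does attain representativity $3$. Once the pair $\alpha,\alpha'$ is produced, the surgery-and-minimality argument above is routine.
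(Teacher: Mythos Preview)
Your strategy coincides with the paper's: for $d\ge 3$ exhibit a compressing disk for $F$ meeting $K$ in at most two points. The paper's entire argument is a one-line reference to an explicit picture of such a disk for the $(-3,5,5)$ case, so in both versions the actual content is the construction of that disk.

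The difficulty is that you never carry out that construction. You correctly identify the ``delicate step'' --- producing consecutive arcs $\alpha,\alpha'$ of $F\cap D_1$ lying on the \emph{same} disk $E_{11}$ with the required control on $\beta$ and $\beta'$ --- and then stop, asserting only that the combinatorics should work out for $d\ge 3$ and fail for $d=2$. But this step is the whole proof; everything else (gluing $\delta$ and $\delta'$, the essentiality/minimality dichotomy) is formal. As written, the argument is a promissory note rather than a proof.

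There is also a genuine worry about the specific template you propose. In Type~(1) one has $|F\cap B_1|=2$, and the two disks $E_{11},E_{12}$ are parallel in $B_1$; their boundary loops cobound a product annulus on $\partial B_1$. That strongly suggests the $2d$ arcs of $F\cap D_1$ alternate between $\partial E_{11}$ and $\partial E_{12}$, in which case \emph{no} two consecutive arcs lie on the same $E_{1j}$ and your $\delta$ cannot exist in the form you describe, regardless of $d$. The paper's figure for $(-3,5,5)$ presumably shows a compressing disk of a different shape --- likely one that crosses between $E_{11}$ and $E_{12}$ or uses more than two of the tangle balls --- rather than one built from a single $E_{1j}$. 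So before the ``routine'' part can begin, you need either to verify that non-alternation actually occurs (with justification), or to change the template for $\delta$ to accommodate the alternating pattern. Until that is done, the argument has a real gap precisely at the point you flag as delicate.
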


\begin{proof}
If $d\ge 3$, then we can find a compressing disk $\delta$ for $F$ such that $|\partial D\cap K|=2$, a contradiction.
See Figure \ref{(-3,5,5)}.
\end{proof}

\begin{figure}[htbp]
	\begin{center}
	\includegraphics[trim=0mm 0mm 0mm 0mm, width=.6\linewidth]{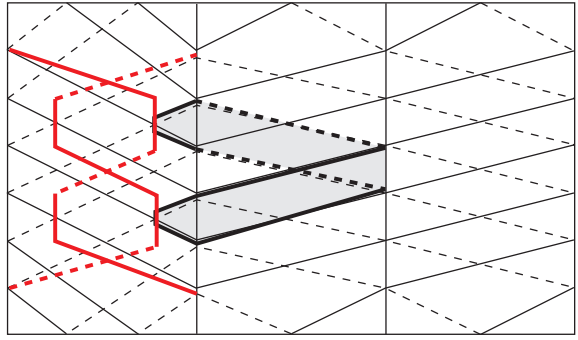}
	\end{center}
	\caption{a compressing disk $\delta$ for $F$ of Type (1) : $(-3,5,5)$-pretzel knot}
	\label{(-3,5,5)}
\end{figure}

\begin{claim}
In Type (2), $k=2$.
\end{claim}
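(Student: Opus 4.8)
The plan is to argue exactly as in the preceding claim (\emph{In Type~(1), $d=2$}): assuming $k\ge 3$, I would exhibit a compressing disk $\delta$ for $F$ with $\abs{\partial\delta\cap K}=2$, contradicting the standing hypothesis $r(F,K)=3$. Hence $k=2$, and together with Claim~\ref{Type 2} this gives $(p,q,r)=(-2,3,5)$ up to mirror image; combined with the Type~(1) conclusion $(p,q,r)=(-2,3,3)$, this yields the ``only if'' direction of Theorem~\ref{main}.

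First I would record the data of Claim~\ref{Type 2} with $d=1$: $F\cap B_1$ is $k+1$ parallel disks of Type~(A), $F\cap B_2$ is $k$ parallel disks of Type~(A), $F\cap B_3$ is a single disk of Type~(B), and $\abs{F\cap D_i}=k(k+1)$ for each $i$. Consequently the $k(k+1)$ parallel arcs of $F\cap D_1$ are cut, from the $B_1$ side, into $k+1$ consecutive blocks of $k$ arcs, and from the $B_2$ side into $k$ consecutive blocks of $k+1$ arcs; only the two outermost disks of each $F\cap B_i$ carry a string of $T_i$, so when $k\ge 3$ both $F\cap B_1$ (having $k+1\ge 4$ disks) and $F\cap B_2$ (having $k\ge 3$ disks) also possess string-free interior disks.

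Next I would assemble $\delta$ from two bands glued along $D_1$, in the spirit of Figure~\ref{(-3,5,5)}. Pick two consecutive arcs $\alpha,\alpha'$ of $F\cap D_1$, let $b\subset D_1$ be the short arc joining them, and choose band subdisks $\delta_1\subset B_1$ and $\delta_2\subset B_2$ with $\delta_1\cap D_1=\delta_2\cap D_1=b$, with $\partial\delta_1-\text{int}\,b\subset F\cap B_1$ and $\partial\delta_2-\text{int}\,b\subset F\cap B_2$, and with $\abs{\delta_1\cap T_1}+\abs{\delta_2\cap T_2}=2$; then $\delta=\delta_1\cup_b\delta_2$ is an embedded disk in the ball $B_1\cup_{D_1}B_2$ with $\partial\delta\subset F$ and $\abs{\partial\delta\cap K}=2$. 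Such a $\delta$ also exists when $k=2$, so the real content is that for $k\ge 3$ the arcs $\alpha,\alpha'$ (and the bands) can be chosen so that $\partial\delta$ is \emph{essential} in $F$, while for $k=2$ every such $\partial\delta$ bounds a disk in $F$ — consistent with the $(-2,3,5)$-pretzel knot having representativity $3$. I expect proving essentiality to be the main obstacle: each individual loop of $F\cap\partial B_i$ already bounds a disk in $F$, so the non-triviality of $\partial\delta$ must be wrung out of the interleaving of the two blockings of $F\cap D_1$ once $k\ge 3$. The tidiest way to settle it is to follow $\partial\delta$ through the decomposition $F=\bigcup_i(F\cap B_i)$ and detect an essential intersection with a standard compressing disk of $F$, or else simply to draw $\delta$ explicitly, as in Figure~\ref{(-3,5,5)}, for the $(-3,4,11)$-pretzel knot. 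Once $\partial\delta$ is essential we get $r(F,K)\le 2$, a contradiction, and therefore $k=2$.
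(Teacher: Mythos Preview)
Your approach is the paper's: assume $k\ge 3$ and exhibit a compressing disk for $F$ meeting $K$ in two points, with essentiality verified by an explicit picture rather than an abstract argument. The paper's own proof is in fact terser than your outline---it simply refers to two separate figures, one for the $(-3,4,11)$ case ($k=3$) and one for the $(-4,5,19)$ case ($k=4$), asserting that $k\ge 5$ follows the latter pattern---so when you carry this out, be prepared to draw the $k=3$ and $k\ge 4$ cases separately rather than expecting a single uniform picture.
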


\begin{proof}
If $k\ge 3$, then we can find a compressing disk $\delta$ for $F$ such that $|\partial D\cap K|=2$, a contradiction.
See Figure \ref{(-3,4,11)} for a $(-3,4,11)$-pretzel knot.
See Figure \ref{(-4,5,19)} for a $(-4,5,19)$-pretzel knot.
For $k\ge 5$, there exists a disk similar to Figure \ref{(-4,5,19)}.
\end{proof}

\begin{figure}[htbp]
	\begin{center}
	\includegraphics[trim=0mm 0mm 0mm 0mm, width=.6\linewidth]{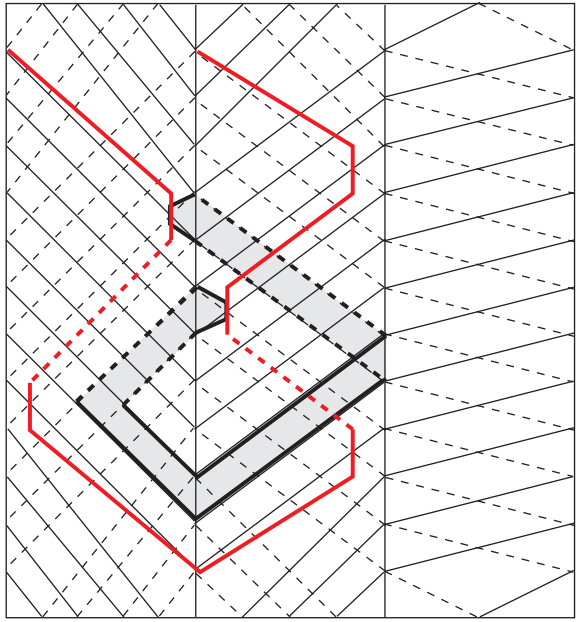}
	\end{center}
	\caption{a compressing disk $\delta$ for $F$ of Type (2) : $(-3,4,11)$-pretzel knot}
	\label{(-3,4,11)}
\end{figure}

\begin{figure}[htbp]
	\begin{center}
	\includegraphics[trim=0mm 0mm 0mm 0mm, width=.6\linewidth]{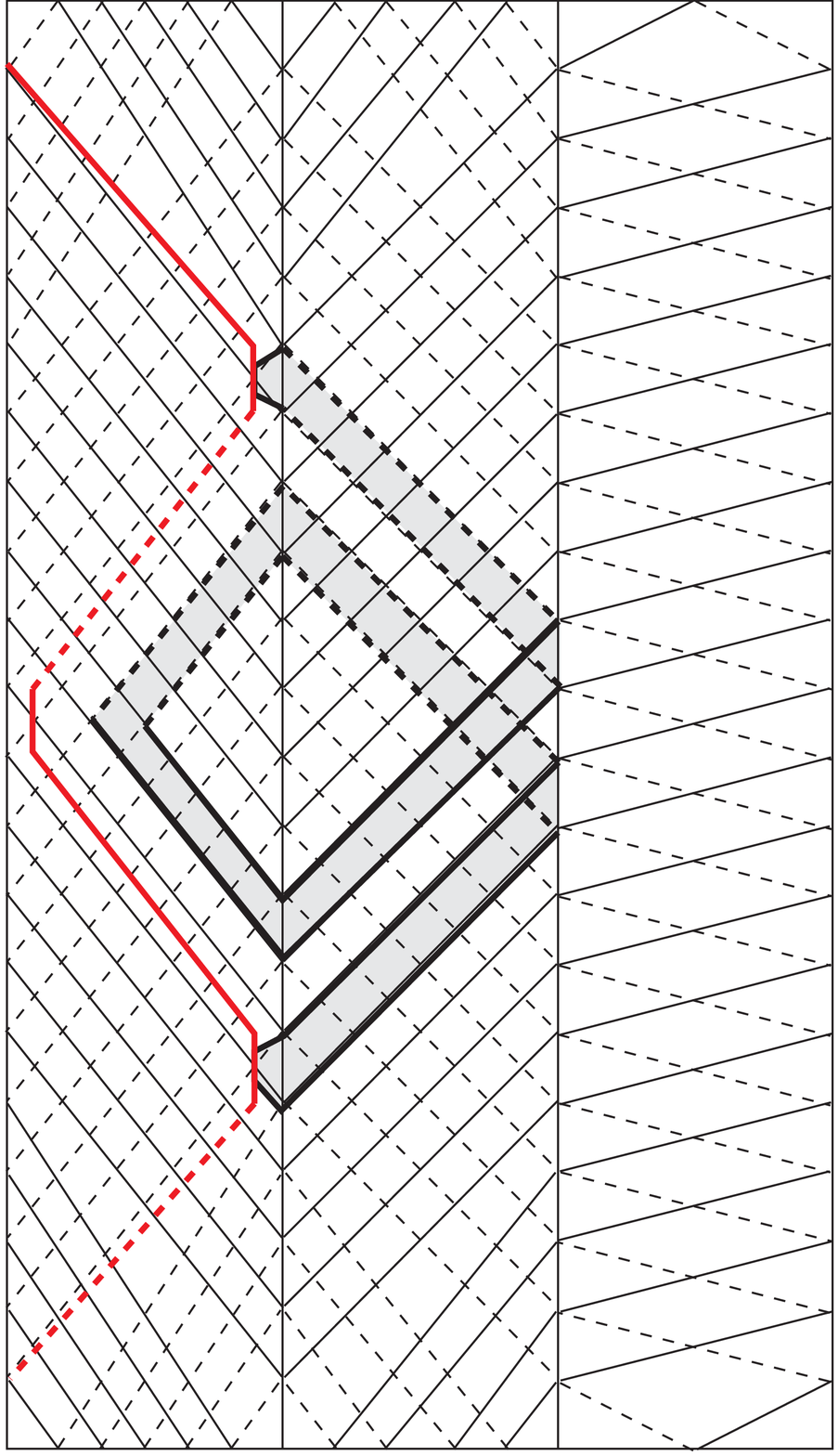}
	\end{center}
	\caption{a compressing disk $\delta$ for $F$ of Type (2) : $(-4,5,19)$-pretzel knot}
	\label{(-4,5,19)}
\end{figure}

Conversely, let $K$ be a $(-2,3,3)$-pretzel knot.
Then $K$ has a closed surface $F$ of Type (1), where $F\cap B_1$ is of Type (A) with $|F\cap B_1|=2$ and $F\cap B_2$ and $F\cap B_3$ are of Type (B).
See Figure \ref{(-2,3,3)}.
By the calculation of the Euler characteristic of $F$, 
\[
\chi(F)=(2+1+1)-(4+4)+4=0
\]
we know that $F$ is a torus, and there exist a meridian disks $D$ and $D'$ for each solid torus component obtained by splitting $S^3$ along $F$ such that $|\partial D\cap K|=3$ and $|\partial D'\cap K|=4$.
By checking that the algebraic intersection number of $\partial D\cap K$ and $\partial D'\cap K$ are equal to $3$ and $4$ respectively, it follows that $K$ is a $(3,4)$-torus knot and $F$ is an unknotted torus for $K$.

\begin{figure}[htbp]
	\begin{center}
	\includegraphics[trim=0mm 0mm 0mm 0mm, width=.6\linewidth]{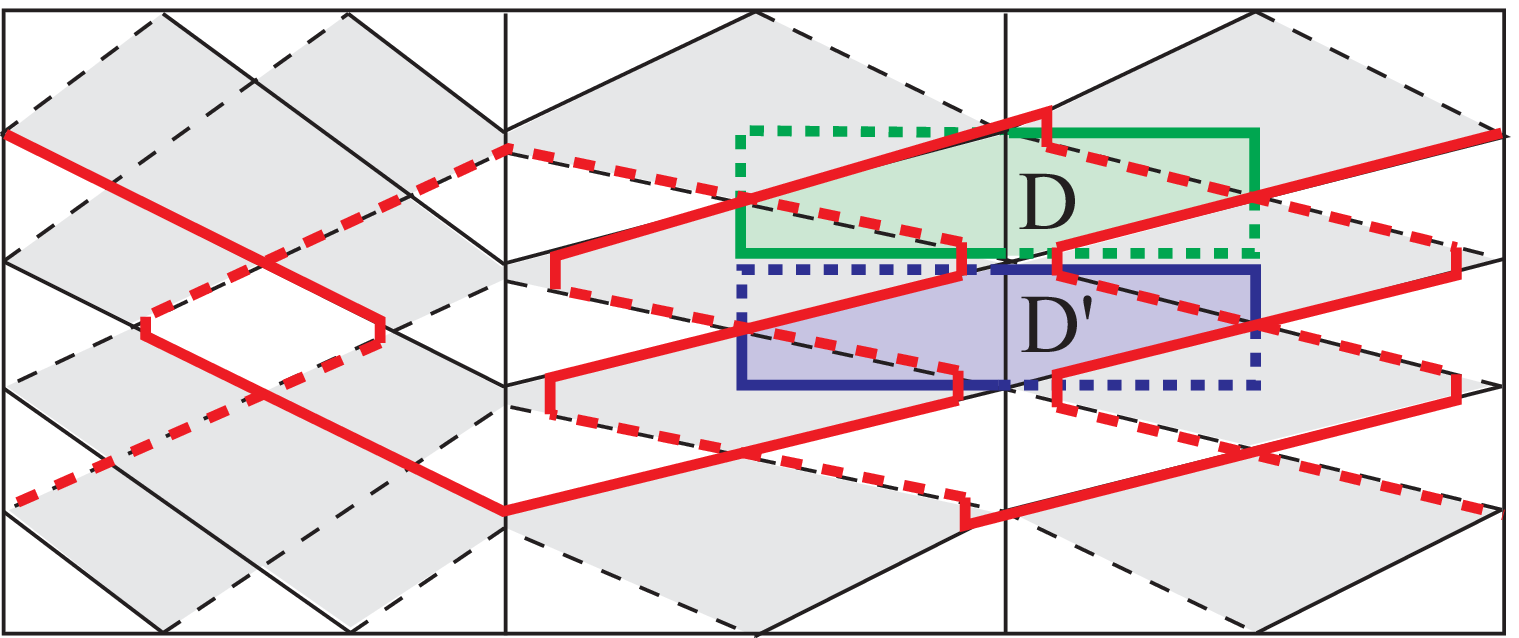}
	\end{center}
	\caption{a $(-2,3,3)$-pretzel knot and an unknotted torus for a $(3,4)$-torus knot}
	\label{(-2,3,3)}
\end{figure}

Next, let $K$ be a $(-2,3,5)$-pretzel knot.
Then $K$ has a closed surface $F$ of Type (2), where $F\cap B_1$ is of Type (A) with $|F\cap B_1|=3$, $F\cap B_2$ is of Type (A) with $|F\cap B_2|=2$ and $F\cap B_3$ are of Type (B).
See Figure \ref{(-2,3,5)}.
By the calculation of the Euler characteristic of $F$, 
\[
\chi(F)=(3+2+1)-(6+6)+6=0
\]
we know that $F$ is a torus, and there exist a meridian disks $D$ and $D'$ for each solid torus component obtained by splitting $S^3$ along $F$ such that $|\partial D\cap K|=3$ and $|\partial D'\cap K|=5$.
By checking that the algebraic intersection number of $\partial D\cap K$ and $\partial D'\cap K$ are equal to $3$ and $5$ respectively, it follows that $K$ is a $(3,5)$-torus knot and $F$ is an unknotted torus for $K$.
\begin{figure}[htbp]
	\begin{center}
	\includegraphics[trim=0mm 0mm 0mm 0mm, width=.6\linewidth]{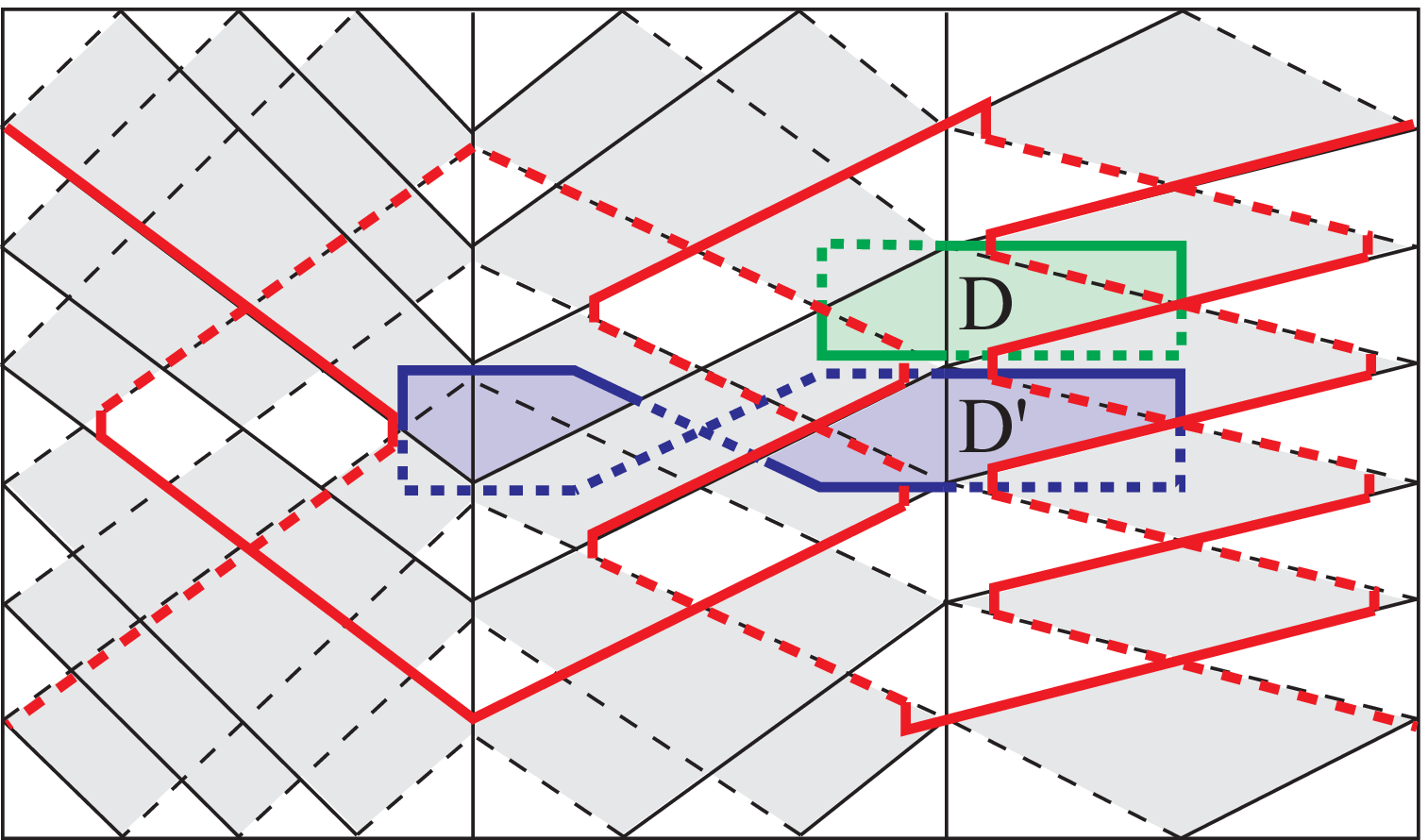}
	\end{center}
	\caption{a $(-2,3,5)$-pretzel knot and an unknotted torus for a $(3,5)$-torus knot}
	\label{(-2,3,5)}
\end{figure}
\end{proof}

\begin{proof} (of Theorem \ref{algebraic})
Let $K$ be a large algebraic knot.
Then $K$ admits an essential $2$-string tangle decomposition $(B,T)\cup(B',T')$ by a Conway sphere $S$ such that $(B,T)$ is obtained from two rational tangles $(B_1,T_1)$ and $(B_2,T_2)$ by gluing along a disk $D$.
Suppose that $r(K)=4$ and let $F$ be a closed surface containing $K$ so that $r(F,K)=4$.

We assume that $|F\cap S|$ is minimal up to isotopy of $F$.
Then $F\cap S$ consists of one loop intersecting $K$ in four points.
Moreover, we assume that $|(F\cap B)\cap D|$ is minimal up to isotopy of $F\cap B$.
Then $(F\cap B)\cap D$ consists of mutually parallel arcs whose two outermost arcs intersect $T$ in one point (c.f. Configuration (1)) or one arc intersecting $T$ in two points (c.f. Configuration (2)).
These are similar configurations as in Figure \ref{conf}.

In Configuration (1), let $\delta$ be the outermost disk in $D$ bounded by an outermost arc of $(F\cap B)\cap D$.
By cutting a disk in $S$ bounded by $F\cap S$ along an arc $\delta\cap S$, and pasting one of the resultant subdisks and $\delta$, we obtain a compressing disk $\delta'$ for $F$ such that $|\partial \delta'\cap K|\le 3$, a contradiction.

In Configuration (2), $F\cap \partial B_i$ consists of one loop.
By applying Lemma \ref{Morse} to $F\cap B_i$, there exists a compressing disk $\epsilon$ for $F\cap B_i$ such that $|\partial \epsilon\cap K|\le 2$  or $F\cap B_i$ is a disk.
In the formar case, we have a contradiction.
Hence $F\cap B_i$ is a disk and $F\cap B$ is also a disk.
However, this shows that a tangle $(B,T)$ is inessential, a contradiction.
\end{proof}

\bigskip


\bibliographystyle{amsplain}

\end{document}